\newtheorem{theorem}{Theorem}[section]
\newtheorem{lemma}[theorem]{Lemma}
\newtheorem{remark}[theorem]{Remark}
\newtheorem{definition}[theorem]{Definition}
\newtheorem{example}[theorem]{Example}
\begin{document}

\title[The quasi-Einstein Equation]{A natural linear equation in affine geometry: The affine quasi-Einstein Equation}
\author{M. Brozos-V\'{a}zquez \, E. Garc\'{i}a-R\'{i}o\, P. Gilkey \, X. Valle-Regueiro}
\address{MBV: Universidade da Coru\~na, Differential Geometry and its Applications Research Group, Escola Polit\'ecnica Superior, 15403 Ferrol,  Spain}
\email{miguel.brozos.vazquez@udc.gal}
\address{EGR-XVR: Faculty of Mathematics,
University of Santiago de Compostela,
15782 Santiago de Compostela, Spain}
\email{eduardo.garcia.rio@usc.es, javier.valle@usc.es}
\address{PBG: Mathematics Department, University of Oregon, Eugene OR 97403-1222, USA}
\email{gilkey@uoregon.edu}
\thanks{Supported by projects MTM2016-75897-P and EM2014/009 (AEI/FEDER, UE)}
\subjclass[2010]{53C21, 53B30, 53C24, 53C44}
\keywords{Strong projective equivalence, Liouville's equivalence, projectively flat, affine quasi--Einstein equation.}

\begin{abstract}
We study the affine quasi-Einstein equation, a second order linear homogeneous equation, which is invariantly defined on any affine manifold. We prove that the space of solutions is finite-dimensional, and its dimension is a strongly projective invariant. Moreover the maximal dimension is shown to be achieved if and only if the manifold is strongly projectively flat.
\end{abstract}

\maketitle

\section{Introduction}

An affine manifold is a pair $\mathcal{M}=(M,\nabla)$ where $M$ is a smooth manifold of dimension $m$ and $\nabla$ is
a torsion free connection on the tangent bundle of $M$. Adopt the {\it Einstein convention} and sum over repeated
indices. Expand $\nabla_{\partial_{x^i}}\partial_{x^j}=\Gamma_{ij}{}^k\partial_{x^k}$ in a system of local coordinates
$\vec x=(x^1,\dots,x^m)$
to define the {\it Christoffel symbols} of the connection $\Gamma=(\Gamma_{ij}{}^k)$; the condition that $\nabla$ is torsion free
is then equivalent to the symmetry $\Gamma_{ij}{}^k=\Gamma_{ji}{}^k$. If $f\in C^\infty(M)$, 
then the {\it Hessian} $\mathcal{H}_\nabla f$ is the symmetric $(0,2)$-tensor
defined by setting
$$
\mathcal{H}_\nabla f:=\nabla^2f=(\partial_{x^i}\partial_{x^j}f-\Gamma_{ij}{}^k\partial_{x^k}f)\,dx^i\otimes dx^j\,.
$$
 Let $\rho_\nabla(x,y):=\operatorname{Tr}\{z\rightarrow R_\nabla(z,x)y\}$ be the {\it Ricci tensor}.
Since in general this need not be a symmetric 2-tensor, we introduce the symmetric and anti-symmetric Ricci tensors:
\begin{eqnarray*}
&&\textstyle\rho_{s,\nabla}(x,y):=\frac12\{\rho_\nabla(x,y)+\rho_\nabla(y,x)\},\\
&&\textstyle\rho_{a,\nabla}(x,y):=\frac12\{\rho_\nabla(x,y)-\rho_\nabla(y,x)\}\,.
\end{eqnarray*}
\subsection{The affine quasi-Einstein equation}
The {\it affine quasi-Einstein operator}
$ \mathfrak{Q}_{\mu,\nabla}$ is the linear second order partial differential operator
\begin{equation}\label{E1.a}
 \mathfrak{Q}_{\mu,\nabla} f:=\mathcal{H}_\nabla f-\mu f\rho_{s,\nabla}
 \text{ mapping }C^\infty(M)\text{ to }C^\infty(S^2M)
\end{equation}
where the eigenvalue $\mu$ is a parameter of the theory. For fixed $\mu$, this operator is natural in the category of affine manifolds
and this family of operators parametrizes, modulo scaling, all the natural second order differential operators from $C^\infty(M)$
to $C^\infty(S^2M)$.
We study the {\it affine quasi-Einstein equation} $ \mathfrak{Q}_{\mu,\nabla} f=0$, i.e.
\begin{equation}\label{E1.b}
\mathcal{H}_\nabla f=\mu f\rho_{s,\nabla}\,.
\end{equation}
We denote the space of all solutions to Equation~(\ref{E1.b}) by
$$
E(\mu,\nabla):=\ker( \mathfrak{Q}_{\mu,\nabla})=\{f\in \mathcal{C}^2(M):\mathcal{H}_\nabla f=\mu f\rho_{s,\nabla}\}\,.
$$
Similarly, if $P\in M$, let $E(P,\mu,\nabla)$ be the linear space of all germs of smooth functions based at $P$ satisfying
Equation~(\ref{E1.b}). Note that if $\rho_{s,\nabla}=0$, then $E(\mu,\nabla)=E(0,\nabla)$ for any $\mu$.
Also observe that $E(0,\nabla)$ is the space of \emph{affine Yamabe solitons} \cite{BGG16}.

The operator $\mathfrak{Q}_{\mu,\nabla}$ of Equation~(\ref{E1.a}) and the associated affine quasi-Einstein Equation~\eqref{E1.b}
are of sufficient interest in their own right in affine geometry to justify a foundational
paper of this nature. However Equation~\eqref{E1.b} also appears in the study of the
pseudo-Riemannian quasi--Einstein equation using the Riemannian extension; we postpone
until the end of the introduction a further discussion of this context to avoid interrupting the flow of our present discussion and
to establish the necessary notational conventions.

\subsection{Foundational results} We will establish the following result in Section~\ref{S2}:

\begin{theorem}\label{T1.1}
Let $\mathcal{M}=(M,\nabla)$ be an affine manifold. Let $f\in E(P,\mu,\nabla)$.
\begin{enumerate}
\item One has $f\in C^\infty(M)$. If $\mathcal{M}$ is real analytic, then $f$ is real analytic.
\item If $X$ is the germ of an affine Killing vector field based at $P$, then\newline$Xf\in E(P,\mu,\nabla)$.
\item If $f(P)=0$, and if $df(P)=0$, then $f\equiv 0$ near $P$.
\item One has $\dim\{E(P,\mu,\nabla)\}\le m+1$.
\item If $\mathcal{M}$ is simply connected and if $\dim\{E(P,\mu,\nabla)\}$ is constant on $M$, then $f$ extends
uniquely to an element of $E(\mu,\nabla)$.\end{enumerate}
\end{theorem}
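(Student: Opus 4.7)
The cornerstone of the argument will be that~(\ref{E1.b}) is a \emph{complete second-order system}: solving for the top derivatives in local coordinates yields
\[
\partial_i\partial_j f = \Gamma_{ij}{}^k\,\partial_k f + \mu\,\rho_{s,\nabla}(\partial_i,\partial_j)\,f,
\]
so every second partial derivative of $f$ is an explicit linear function of the $1$-jet $(f,df)$. Equivalently, $(f,df)$ satisfies a first-order linear ODE system along any smooth curve.

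For~(1), I would bootstrap: if $f\in C^2$, then the right-hand side above is at least $C^1$, so $f\in C^3$, and iterating gives $f\in C^\infty$. In the real-analytic category, the same identity lets me control all Taylor coefficients of $f$ at $P$ in terms of its value and differential at $P$, and real-analyticity follows either by direct majorant estimates or by Cauchy--Kowalewski applied to the first-order prolongation. For~(2), an affine Killing field $X$ satisfies $\mathcal{L}_X\nabla=0$, which implies $\mathcal{L}_X\rho_{s,\nabla}=0$ and $\mathcal{L}_X\mathcal{H}_\nabla f=\mathcal{H}_\nabla(Xf)$; applying $\mathcal{L}_X$ to~(\ref{E1.b}) then yields $\mathcal{H}_\nabla(Xf)=\mu(Xf)\rho_{s,\nabla}$, so $Xf\in E(P,\mu,\nabla)$. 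For~(3), I would restrict $f$ to a geodesic $\gamma$ with $\gamma(0)=P$: contracting the equation with $\dot\gamma^i\dot\gamma^j$ and using $\nabla_{\dot\gamma}\dot\gamma=0$ yields the linear ODE $\ddot\varphi(t)=\mu\,\rho_{s,\nabla}(\dot\gamma,\dot\gamma)\,\varphi(t)$ for $\varphi(t):=f(\gamma(t))$; the hypotheses $f(P)=df(P)=0$ give $\varphi(0)=\varphi'(0)=0$, so $\varphi\equiv 0$, and since geodesics fill a normal neighborhood of $P$, $f$ vanishes near $P$. For~(4), the map $E(P,\mu,\nabla)\to\mathbb{R}\oplus T_P^*M$, $f\mapsto(f(P),df(P))$, is linear and, by~(3), injective, so $\dim E(P,\mu,\nabla)\le m+1$.

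For~(5), the prolongation identity defines a natural linear connection $\tilde\nabla$ on the rank-$(m+1)$ bundle $\underline{\mathbb{R}}\oplus T^*M$ whose parallel sections are precisely the $1$-jets of solutions to~(\ref{E1.b}). By~(3) the $1$-jet map from $E(Q,\mu,\nabla)$ into the fibre $\mathbb{R}\oplus T_Q^*M$ is injective with image of dimension $d:=\dim E(Q,\mu,\nabla)$; under the constancy hypothesis on $d$, these fibres assemble into a smooth rank-$d$ subbundle $\mathcal{E}\subset\underline{\mathbb{R}}\oplus T^*M$ preserved by $\tilde\nabla$, and the existence of $d$ linearly independent local parallel sections through every point forces $\tilde\nabla|_\mathcal{E}$ to be flat. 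Since $M$ is simply connected, each local parallel section of $\mathcal{E}$ extends uniquely to a global parallel section, whose first component is the desired extension of $f$ to an element of $E(\mu,\nabla)$.

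\textbf{Main obstacle.} Parts~(1)--(4) should be essentially immediate from the prolongation identity. The delicate point is~(5): one must verify that the pointwise $1$-jet images glue to a smooth subbundle on which $\tilde\nabla$ restricts to a flat connection. The constant-dimension assumption is precisely what makes the subbundle well defined, and simple connectivity then removes the monodromy so that local parallel transport globalises.
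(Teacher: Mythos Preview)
Your proposal is correct in all five parts, and for (2), (4), (5) it is essentially what the paper does (the paper for (5) simply cites Nomizu's analytic-continuation argument for Killing fields, which your prolongation/flat-subbundle description makes explicit). The interesting comparison is in (1) and (3), where your methods differ from the paper's.

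For (1), you bootstrap directly from the completeness of the second-order system: since every $\partial_i\partial_j f$ is expressed as a smooth function of the $1$-jet of $f$, $C^2$ implies $C^3$, and induction gives $C^\infty$; analyticity then follows from the first-order prolongation (e.g.\ Cauchy--Kowalewski or the fact that parallel sections of an analytic connection are analytic). The paper instead takes the coordinate trace $D_\mu:=\sum_i\mathfrak{Q}_{\mu,\nabla,ii}$, observes that it is a second-order \emph{elliptic} operator, and invokes elliptic regularity (and analytic hypoellipticity in the analytic case). Your bootstrap is more elementary and exploits the overdetermined nature of the system; the paper's trace trick is slicker and immediately plugs into standard PDE theory, but at the cost of citing deeper results.

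For (3), your argument is more geometric and considerably shorter: restricting to a geodesic from $P$ turns the equation into a linear second-order ODE for $\varphi=f\circ\gamma$ with $\varphi(0)=\varphi'(0)=0$, hence $\varphi\equiv0$, and the exponential map covers a neighbourhood of $P$. The paper instead carries out a hands-on Gronwall-type estimate: bounding $\Gamma$ and $\mu\rho_{s,\nabla}$ by a constant $T$ on a small ball $B_\varepsilon(0)$, it integrates along radial segments to obtain $\|f\|_1\le 6\varepsilon T\|f\|_1$, and choosing $\varepsilon<1/(12T)$ forces $\|f\|_1=0$. Your route is cleaner; the paper's has the minor virtue of avoiding any appeal to normal coordinates or existence of geodesics.
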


\subsection{Projective equivalence}

\begin{definition}\rm We say that  $\nabla$ and $\tilde\nabla$  are {\it projectively equivalent} if there exists a $1$-form
$\omega$ so that
$\nabla_XY= \tilde\nabla_XY+\omega(X)Y+\omega(Y)X$ for all $X$ and $Y$. In this setting, we say that
$\omega$ {\it provides a projective equivalence from $\nabla$ to $\tilde\nabla$}; $-\omega$ then provides
a projective equivalence from $\tilde\nabla$ to $\nabla$. If $\omega$ is closed, we say
that $\nabla$ and $\tilde\nabla$ are {\it strongly projectively equivalent}. 
\end{definition}

If two projectively equivalent connections have symmetric Ricci tensors, then the 1-form $\omega$ giving the projective equivalence
is closed and the two connections are, in fact, strongly projectively equivalent (see \cite{Eisenhart, NS, S95} for more information).

\medskip

We say that $\nabla$ is {\it projectively flat} if $\nabla$
is projectively equivalent to a flat connection. 
Note that $\nabla$ is projectively
flat if and only if it is possible to choose a coordinate system so that the unparametrized geodesics of $\nabla$ are straight lines. 
Strongly projectively flat surfaces are characterized as follows (see \cite{Eisenhart, NS}).

\begin{lemma}\label{L4.1} Let $\mathcal{M}$ be an affine surface. 
\begin{enumerate}
\item Let $\omega$ provide a projective equivalence between $\nabla$ and a flat connection.
\begin{enumerate}
\item If $\rho_{a,\nabla}=0$, then $d\omega=0$ so $\nabla$ is strongly projectively flat.
\item If $d\omega=0$, then $\rho_\nabla$ and $\nabla\rho_\nabla$ are totally symmetric.
\end{enumerate}
\item If $\rho_\nabla$ and $\nabla\rho_\nabla$ are totally symmetric, then $\nabla$ is strongly projectively flat.
\end{enumerate}
\end{lemma}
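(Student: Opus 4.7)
The plan is to anchor everything in the Ricci transformation formula under a projective change of connection. With the difference tensor $D(X,Y):=\omega(X)Y+\omega(Y)X$, expanding the identity
\[
R_\nabla(X,Y)Z=R_{\tilde\nabla}(X,Y)Z+(\tilde\nabla_X D)(Y,Z)-(\tilde\nabla_Y D)(X,Z)+D(X,D(Y,Z))-D(Y,D(X,Z))
\]
and taking a trace, together with the flatness of $\tilde\nabla$, yields
\[
\rho_\nabla(Y,Z)=(\tilde\nabla_Z\omega)(Y)-m(\tilde\nabla_Y\omega)(Z)+(m-1)\omega(Y)\omega(Z),
\]
where $m=\dim M$. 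Antisymmetrising in $Y,Z$ and using $(\tilde\nabla_Y\omega)(Z)-(\tilde\nabla_Z\omega)(Y)=d\omega(Y,Z)$ (torsion-freeness of $\tilde\nabla$) produces $\rho_{a,\nabla}=-\tfrac{m+1}{2}\,d\omega$, from which (1)(a) is immediate.

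For (1)(b), the closedness of $\omega$ makes $\tilde\nabla\omega$ symmetric and collapses the displayed formula to the symmetric expression $\rho_\nabla=-(m-1)(\tilde\nabla\omega-\omega\otimes\omega)$. Flatness of $\tilde\nabla$ together with $d\omega=0$ also forces $\tilde\nabla^2\omega$ to be totally symmetric in its three slots. Expanding
\[
(\nabla_X\rho_\nabla)(Y,Z)=(\tilde\nabla_X\rho_\nabla)(Y,Z)-\rho_\nabla(D(X,Y),Z)-\rho_\nabla(Y,D(X,Z))
\]
and re-expressing every appearance of $\tilde\nabla\omega$ in terms of $\rho_\nabla$ regroups the result into three manifestly totally symmetric pieces: a multiple of $\tilde\nabla^2\omega$; a multiple of the cyclic expression $\omega(X)\rho_\nabla(Y,Z)+\omega(Y)\rho_\nabla(X,Z)+\omega(Z)\rho_\nabla(X,Y)$; and a multiple of $\omega\otimes\omega\otimes\omega$. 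Hence $\nabla\rho_\nabla$ is totally symmetric.

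For (2) I would reverse the analysis. Guided by (1)(b), I pose
\[
(\nabla_X\omega)(Y)=-\rho_\nabla(X,Y)-\omega(X)\omega(Y)
\]
as an overdetermined first-order system for the unknown 1-form $\omega$; the only compatibility required is the Ricci identity $(\nabla_X\nabla_Y-\nabla_Y\nabla_X)\omega(Z)=-\omega(R_\nabla(X,Y)Z)$. Differentiating the ansatz and using symmetry of $\rho_\nabla$ reduces it to
\[
(\nabla_X\rho_\nabla)(Y,Z)-(\nabla_Y\rho_\nabla)(X,Z)=\omega(R_\nabla(X,Y)Z)-\omega(X)\rho_\nabla(Y,Z)+\omega(Y)\rho_\nabla(X,Z).
\]
In dimension two the full curvature tensor is determined algebraically by the symmetric Ricci tensor via $R_\nabla(X,Y)Z=\rho_\nabla(Y,Z)X-\rho_\nabla(X,Z)Y$, so the right-hand side collapses to zero and the compatibility becomes precisely the hypothesis that $\nabla\rho_\nabla$ is totally symmetric. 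Frobenius then produces local solutions $\omega$ with arbitrary initial covector; the symmetry of the right-hand side of the ansatz in $X,Y$ forces $d\omega=0$; and setting $\tilde\nabla:=\nabla-D$ gives a torsion-free connection whose Ricci tensor vanishes by the formula of the first paragraph. Applying the same two-dimensional curvature-Ricci identity to $\tilde\nabla$ then yields $R_{\tilde\nabla}=0$, so $\tilde\nabla$ is flat and $\omega$ provides the required strong projective equivalence.

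The main technical obstacle lies in the compatibility check in (2), where the hypothesis ``$\nabla\rho_\nabla$ totally symmetric'' must be identified as exactly the Frobenius integrability condition for the prolonged system for $\omega$. This rests on the specifically two-dimensional algebraic identity expressing the full curvature tensor in terms of the Ricci tensor; in higher dimensions an additional vanishing condition on the projective Weyl tensor would be required, which is precisely what makes the surface case special.
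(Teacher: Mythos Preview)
Your argument is correct. The paper does not actually supply its own proof of this lemma: it is stated as a classical characterization and attributed to Eisenhart and Nomizu--Sasaki, so there is nothing to compare against at the level of technique. Your write-up gives a self-contained derivation along the standard lines one finds in those references: the Ricci transformation formula under a projective change yields $\rho_{a,\nabla}=-\tfrac{m+1}{2}\,d\omega$ for (1)(a); the symmetric rewriting $\rho_\nabla=-(m-1)(\tilde\nabla\omega-\omega\otimes\omega)$ together with the total symmetry of $\tilde\nabla^2\omega$ (from flatness plus $d\omega=0$) handles (1)(b); and for (2) the prolonged first-order system $(\nabla_X\omega)(Y)=-\rho_\nabla(X,Y)-\omega(X)\omega(Y)$ has Frobenius integrability precisely when $\nabla\rho_\nabla$ is totally symmetric, because the surface identity $R_\nabla(X,Y)Z=\rho_\nabla(Y,Z)X-\rho_\nabla(X,Z)Y$ kills the curvature term in the Ricci commutator. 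Your closing remark that this last step is the genuinely two-dimensional input (replaced in higher dimensions by vanishing of the projective Weyl tensor) is exactly right.

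One small cosmetic point: in the (1)(b) computation, when you write that the expansion ``regroups the result into three manifestly totally symmetric pieces,'' it would strengthen the exposition to display the final identity
\[
(\nabla_X\rho_\nabla)(Y,Z)=-(m-1)(\tilde\nabla^2\omega)(X,Y,Z)+2(m-1)\,\omega(X)\omega(Y)\omega(Z)-2\!\sum_{\mathrm{cyc}}\omega(X)\rho_\nabla(Y,Z),
\]
since the intermediate cross-terms $-2\omega(X)\rho_\nabla(Y,Z)-\omega(Y)\rho_\nabla(X,Z)-\omega(Z)\rho_\nabla(X,Y)$ from the $D$-correction are \emph{not} symmetric on their own and only become so after combining with the $\tilde\nabla\omega$-terms coming from $\tilde\nabla_X\rho_\nabla$. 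The reader may otherwise wonder where the asymmetry went.
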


Two projectively equivalent connections are said to be \emph{Liouville projectively equivalent} if their Ricci tensors coincide
(see, for example, \cite{EGR-KR, NS}). 
We will establish the following result in Section~\ref{S3.1}. It shows that $\dim\{E(-\frac1{m-1},\nabla)\}$ is a strong projective invariant
and that $\dim\{E(\mu,\nabla)\}$ for arbitrary $\mu$ is a strong Liouville projective invariant.

\begin{theorem}\label{T1.3}
Let $\mathcal{M}$ be an affine manifold of dimension $m$. Let $\mu_m=-\frac1{m-1}$. Let $\omega=dg$
provide a strong projective equivalence from $\nabla$ to $\tilde\nabla$.
\begin{enumerate}
\item The map $f\rightarrow e^gf$ is an isomorphism from $E(P,\mu_m,\nabla)$ to $E(P,\mu_m,\tilde\nabla)$.
\item The following assertions are equivalent:
\begin{enumerate}
\item $\rho_{s,\tilde\nabla}=\rho_{s,\nabla}$.
\item $\mathcal{H}_\nabla g-dg\otimes dg=0$.
\item $e^{ -g}\in E(P,0,\nabla)$.
\end{enumerate}
\item If any of the assertions in (2) hold, then the map $f\rightarrow e^gf$ is an isomorphism from $E(P,\mu,\nabla)$ to 
$E(P,\mu,\tilde\nabla)$
for any $\mu$.
\end{enumerate}\end{theorem}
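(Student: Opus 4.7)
The plan is to reduce the theorem to two transformation identities under the strong projective change $\omega = dg$ between $\nabla$ and $\tilde\nabla$, and then combine them algebraically. First I would establish:
(i) \emph{Hessian transformation:} since $\tilde\nabla - \nabla$ is a tensor of the form $(X,Y) \mapsto \pm(\omega(X) Y + \omega(Y) X)$, the Hessian of any function $h$ transforms as
\[\mathcal{H}_{\tilde\nabla} h - \mathcal{H}_\nabla h = \pm(\omega \otimes dh + dh \otimes \omega),\]
the sign being fixed by the projective-change convention.
(ii) \emph{Ricci transformation:} by tracing the standard projective-change formula for the Riemann tensor and using the closedness of $\omega$ (which makes $\nabla\omega = \mathcal{H}_\nabla g$ symmetric, so the antisymmetric Ricci contribution drops out), one obtains
\[\rho_{s, \tilde\nabla} - \rho_{s, \nabla} = \mp(m-1)\bigl[\mathcal{H}_\nabla g - dg \otimes dg\bigr],\]
with the sign coupled to that of (i). Careful sign bookkeeping in (ii) is the principal technical obstacle.

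For part (1), I would apply both identities with $h = e^g f$. The product-rule expansion
\[\mathcal{H}_\nabla(e^g f) = e^g\bigl[\mathcal{H}_\nabla f + dg \otimes df + df \otimes dg + f(\mathcal{H}_\nabla g + dg \otimes dg)\bigr]\]
combined with the Hessian correction leads, after the cross terms $dg \otimes df + df \otimes dg$ cancel, to the clean identity
\[\mathcal{H}_{\tilde\nabla}(e^g f) = e^g\bigl[\mathcal{H}_\nabla f + f(\mathcal{H}_\nabla g - dg \otimes dg)\bigr].\]
On the other hand, applying (ii) and using the crucial numerical identity $\mu_m(m-1) = -1$,
\[\mu_m (e^g f) \rho_{s, \tilde\nabla} = e^g\bigl[\mu_m f \rho_{s, \nabla} + f(\mathcal{H}_\nabla g - dg \otimes dg)\bigr].\]
Subtracting yields $\mathfrak{Q}_{\mu_m, \tilde\nabla}(e^g f) = e^g \mathfrak{Q}_{\mu_m, \nabla}(f)$. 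This makes transparent why the eigenvalue $\mu_m = -\tfrac{1}{m-1}$ is distinguished: it is the unique value at which the extra Ricci term exactly cancels the extra Hessian term for every $f$. Invertibility of multiplication by the nowhere-vanishing function $e^g$ (inverse $e^{-g}$) completes part (1).

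For part (2), the equivalence (b)$\Leftrightarrow$(c) follows from the direct coordinate calculation
\[\mathcal{H}_\nabla(e^{-g}) = e^{-g}\bigl[dg \otimes dg - \mathcal{H}_\nabla g\bigr],\]
which vanishes exactly when (b) holds. The equivalence (a)$\Leftrightarrow$(b) is immediate from (ii). Finally, for part (3): if (b) holds then the bracketed terms $\mathcal{H}_\nabla g - dg \otimes dg$ vanish in the formulas established above, producing unconditionally $\mathcal{H}_{\tilde\nabla}(e^g f) = e^g \mathcal{H}_\nabla f$ and $\rho_{s, \tilde\nabla} = \rho_{s, \nabla}$. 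Hence $\mathfrak{Q}_{\mu, \tilde\nabla}(e^g f) = e^g \mathfrak{Q}_{\mu, \nabla}(f)$ for every $\mu$, and the map $f \mapsto e^g f$ gives the isomorphism at arbitrary eigenvalues.
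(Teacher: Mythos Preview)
Your proposal is correct and follows essentially the same approach as the paper: both hinge on the Ricci transformation $\rho_{s,\tilde\nabla}=\rho_{s,\nabla}-(m-1)(\mathcal{H}_\nabla g-dg\otimes dg)$ and the intertwining relation $\mathfrak{Q}_{\mu,\tilde\nabla}(e^g f)=e^g\,\mathfrak{Q}_{\mu,\nabla}(f)$ when $\mu=\mu_m$ or when $\mathcal{H}_\nabla g-dg\otimes dg=0$, and then read off all three parts exactly as you do. The only difference is cosmetic: the paper isolates these identities as a lemma and verifies them pointwise in coordinates with $\Gamma(P)=0$, computing only the $(1,1)$ component and polarizing, whereas you carry out the full product-rule expansion directly and exhibit the cancellation of the cross terms $dg\otimes df+df\otimes dg$ explicitly; your version is slightly more transparent about why $\mu_m$ is distinguished, while the paper's normal-coordinate trick shortens the Ricci computation.
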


\begin{remark}\label{R1.4}\rm If $\nabla$ and $\tilde\nabla$ are strongly projectively equivalent, then the alternating Ricci tensors coincide, i.e. 
$\rho_{a,\tilde\nabla}=\rho_{a,\nabla}$ (see \cite{S95}).
However, if $\nabla$ and $\tilde\nabla$ are only projectively equivalent, then the alternating Ricci tensors can differ and Theorem~\ref{T1.3}
can fail.
Let $\nabla$ be the
usual flat connection on $\mathbb{R}^2$ and let $\omega=x^2dx^1$ define a projective equivalence from $\nabla$ to $\tilde\nabla$.
It is a straightforward computation to see that $\dim\{E(P,-1,\nabla)\}=3$ and $\dim\{E(P,-1,\tilde\nabla)\}=0$.
Thus Theorem~\ref{T1.3} fails if we replace strong projective equivalence by projective equivalence.
Although the geodesic structure is unchanged, $\rho_{a,\tilde\nabla}\ne0$ in this instance and consequently the alternating Ricci tensor is not
preserved by projective equivalence either.
\end{remark}

We can say more about the geometry if $\dim\{E(P,\mu,\nabla)\}=m+1$ is extremal for some $\mu$. The eigenvalue $\mu_m:=-\frac1{m-1}$ plays a
distinguished role. We will establish the following result in Section~\ref{S3.2}:

\begin{theorem}\label{T1.5}
Let $\mathcal{M}$ be an affine manifold of dimension $m$. Let $\mu_m:=-\frac1{m-1}$.
\begin{enumerate}
\item $\mathcal{M}$ is strongly projectively flat if and only if $\dim\{E(\mu_m,\nabla)\}=m+1$.
\item If $\dim\{E(\mu,\nabla)\}=m+1$ for any $\mu$, then $\mathcal{M}$ is strongly projectively flat.
\item If $\dim\{E(\mu,\nabla)\}=m+1$ for $\mu\ne\mu_m$, then $\mathcal{M}$ is Ricci flat.
\item Suppose $\dim\{E(P,\mu_m,\nabla)\}=m+1$. One may choose a basis $\{\phi_0,\dots,\phi_m\}$ for $E(P,\mu_m,\nabla)$ so that
$\phi_0(P)\ne0$ and so that $\phi_i(P)=0$ for $i>0$. Set $z^i:=\phi_i/\phi_0$. Then $\vec z=(z^1,\dots,z^m)$
is a system of coordinates defined near $P$ such that the unparametrized geodesics of $\mathcal{M}$ are straight lines.\end{enumerate}\end{theorem}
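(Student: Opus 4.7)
The plan is to build every part on a single algebraic identity. Given a basis $\phi_0,\ldots,\phi_m$ of $E(P,\mu,\nabla)$ normalized as in~(4), set $z^i=\phi_i/\phi_0$ near $P$. Substituting $\phi_i = z^i\phi_0$ into $\mathcal{H}_\nabla\phi_i = \mu\phi_i\rho_{s,\nabla}$, expanding by Leibniz, and subtracting $z^i$ times the equation for $\phi_0$, the $\mu\rho_{s,\nabla}$-terms cancel and leave
\begin{equation*}
\mathcal{H}_\nabla z^i = -\bigl(dz^i\otimes d\log\phi_0 + d\log\phi_0\otimes dz^i\bigr),
\end{equation*}
an identity that is independent of $\mu$. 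All four parts are derived from this.

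First I would check that $z=(z^1,\ldots,z^m)$ is a coordinate system near $P$: if $\sum c_i\,dz^i(P)=0$, then $\sum c_i\phi_i$ has vanishing value and differential at $P$, so by Theorem~\ref{T1.1}(3) it vanishes identically near $P$, forcing all $c_i=0$. In the $z$-coordinates $(\mathcal{H}_\nabla z^i)_{jk} = -\Gamma_{jk}^i$, so the identity reads $\Gamma_{jk}^i = \omega_j\delta_k^i + \omega_k\delta_j^i$ with $\omega = d\log\phi_0$. This exhibits $\nabla$ as strongly projectively equivalent to the flat connection whose Christoffel symbols vanish in $z$-coordinates, and since projectively equivalent connections share their unparametrized geodesics, this proves~(4). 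The backward implication of~(1) and all of~(2) follow from~(4) applied at each point, combined with Theorem~\ref{T1.1}(5) to pass from germs to global solutions. The forward implication of~(1) uses Theorem~\ref{T1.3}(1): an isomorphism $E(P,\mu_m,\nabla) \cong E(P,\mu_m,\tilde\nabla)$ with $\tilde\nabla$ flat reduces the count to solutions of $\partial_i\partial_j f = 0$ in flat coordinates, giving dimension $m+1$.

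For~(3), I work in the $z$-coordinates of~(4), where $\Gamma_{jk}^i = \omega_j\delta_k^i + \omega_k\delta_j^i$ is purely the projective correction. A direct computation of $R_{ijk}^l$ from this explicit Christoffel form, followed by tracing, yields
\begin{equation*}
\rho_{\nabla,jk} = -(m-1)\bigl[(\mathcal{H}_\nabla g)_{jk}+(dg\otimes dg)_{jk}\bigr], \qquad g = \log\phi_0.
\end{equation*}
Taking the Hessian of $\log\phi_0$ and using $\mathcal{H}_\nabla\phi_0 = \mu\phi_0\rho_{s,\nabla}$ gives the cancellation $\mathcal{H}_\nabla g + dg\otimes dg = \mu\rho_{s,\nabla}$, and substituting produces $\rho_\nabla = -(m-1)\mu\rho_{s,\nabla}$. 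Splitting into symmetric and antisymmetric parts yields $[1+(m-1)\mu]\rho_{s,\nabla} = 0$ and $\rho_{a,\nabla} = 0$; since the coefficient is nonzero precisely when $\mu\ne\mu_m$, we conclude $\rho_\nabla = 0$.

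The hard part will be the Ricci computation of~(3): carrying out the curvature trace from the explicit Christoffel form without sign errors, and pinning down exactly the cancellation $\mathcal{H}_\nabla g + dg\otimes dg = \mu\rho_{s,\nabla}$ that makes $\mu_m = -1/(m-1)$ the exceptional value. Everything else is bookkeeping once the identity for $\mathcal{H}_\nabla z^i$ is in hand; the only remaining subtlety is the local-to-global passage in~(1) and~(2), where Theorem~\ref{T1.1}(5) (together with the implicit simple connectedness) promotes the local strong projective flatness produced by~(4) to the global statement.
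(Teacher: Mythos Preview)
Your argument is correct and, for parts (1), (2), and (4), is essentially identical to the paper's: the paper derives the same relation $\Gamma_{ij}{}^k=\delta_i^k\,\partial_{z^j}g+\delta_j^k\,\partial_{z^i}g$ in the $z$-coordinates by expanding $e^{-g}\{z^k\mathfrak{Q}_{\mu,\nabla}(e^g)-\mathfrak{Q}_{\mu,\nabla}(z^ke^g)\}=0$, which is exactly your Leibniz identity for $\mathcal{H}_\nabla z^i$ written out componentwise.

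For part~(3) the routes diverge slightly. The paper does not compute the Ricci tensor from the Christoffel form; instead it observes that since $dg$ gives a strong projective equivalence to a flat $\tilde\nabla$, Theorem~\ref{T1.3}(1) forces $e^g\in E(P,\mu_m,\nabla)$, while by construction $e^g=\phi_0\in E(P,\mu,\nabla)$; comparing the two Hessian equations gives $(\mu-\mu_m)\rho_{s,\nabla}=0$. The vanishing of $\rho_{a,\nabla}$ is then imported from Remark~\ref{R1.4} (strong projective equivalence preserves the alternating Ricci tensor). Your direct curvature computation $\rho_\nabla=-(m-1)\mu\,\rho_{s,\nabla}$ is a clean alternative: it recovers both the symmetric and antisymmetric conclusions in one stroke, without appealing to Theorem~\ref{T1.3} or Remark~\ref{R1.4}. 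The paper's version is shorter because the machinery is already in place; yours is more self-contained.
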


\begin{remark}\rm The coordinates of Assertion~(4) are very much in the spirit of the Weierstrass preparation theorem for minimal surfaces;
geometrically meaningful local coordinates arise from the underlying analysis. 
\end{remark}

We will prove the following result in Section~\ref{S3.3}:

\begin{theorem}\label{T1.7}
Let $\mathcal{M}$ be an affine manifold of dimension $m$. Let $\mu_m:=-\frac1{m-1}$.
\begin{enumerate}
\item If $\mathcal{M}$ is strongly projectively equivalent to a connection $\tilde\nabla$ with $\rho_{s,\tilde\nabla}=0$, then $E(\mu_m,\nabla)\ne0$.
\item If there exists $f\in E(P,\mu_m,\nabla)$ with $f(P)\ne0$, then $\mathcal{M}$ is strongly projectively equivalent to a connection $\tilde\nabla$ with
$\rho_{s,\tilde\nabla}=0$ near $P$.
\end{enumerate}
\end{theorem}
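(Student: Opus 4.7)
The proof would exploit a single key observation combined directly with Theorem~\ref{T1.3}(1): since $\mathcal{H}_\nabla$ annihilates every constant function, the constant function $1$ belongs to $E(\mu,\nabla)$ exactly when $\mu\rho_{s,\nabla}=0$, and for the distinguished eigenvalue $\mu_m=-\frac{1}{m-1}\neq 0$ this happens if and only if $\rho_{s,\nabla}=0$. Thus a connection with vanishing symmetric Ricci tensor is characterized by admitting the constant $1$ as a quasi-Einstein solution, and the isomorphism $f\mapsto e^g f$ provided by Theorem~\ref{T1.3}(1) will interchange this constant with a function of the form $e^{\pm g}$ on any strongly projectively equivalent connection. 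Both assertions are then essentially readings of this bridge in opposite directions.

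For assertion~(1), I would let $\omega$ be the closed $1$-form providing the strong projective equivalence from $\nabla$ to $\tilde\nabla$ and, on a simply connected neighborhood $U$ of a chosen $P\in M$, write $\omega=dg$. Applying Theorem~\ref{T1.3}(1) with the roles of the two connections interchanged (the relevant $1$-form is then $-dg$), the constant $1\in E(P,\mu_m,\tilde\nabla)$ supplied by $\rho_{s,\tilde\nabla}=0$ corresponds to $e^{-g}\in E(P,\mu_m,\nabla)$, which is nowhere zero and in particular nontrivial, so $E(\mu_m,\nabla)\neq 0$.

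For assertion~(2), I would start with $f\in E(P,\mu_m,\nabla)$ satisfying $f(P)\neq 0$, restrict to a neighborhood on which $f$ has constant sign (and replace $f$ by $-f$ if necessary to assume $f>0$), and set $g:=-\log f$, so that $f=e^{-g}$. The $1$-form $\omega:=dg$ is exact, hence closed, so the connection $\tilde\nabla$ defined locally by $\tilde\nabla_X Y:=\nabla_X Y-\omega(X)Y-\omega(Y)X$ is strongly projectively equivalent to $\nabla$ via $\omega$. Theorem~\ref{T1.3}(1) then sends $f$ to $e^g f=1\in E(P,\mu_m,\tilde\nabla)$, and since $\mathcal{H}_{\tilde\nabla}(1)=0$ the quasi-Einstein equation collapses to $\mu_m\rho_{s,\tilde\nabla}=0$; as $\mu_m\neq 0$, this yields $\rho_{s,\tilde\nabla}=0$ on the neighborhood.

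There is no genuine analytic difficulty in this plan, since the real work is already done by Theorem~\ref{T1.3}. The one thing requiring careful bookkeeping is keeping the sign conventions straight, so that the choice $g=-\log f$ (rather than $+\log f$) matches the direction of the isomorphism $f\mapsto e^g f$ in Theorem~\ref{T1.3}(1) and the convention in the definition of projective equivalence; getting this alignment wrong would swap a connection whose symmetric Ricci tensor vanishes with one whose symmetric Ricci tensor has to satisfy a genuinely nontrivial equation.
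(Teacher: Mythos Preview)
Your proof is correct and follows essentially the same line as the paper's. Both arguments hinge on the change of variable $f=e^{-g}$ (equivalently $g=-\log f$) linking strong projective equivalence to a quasi-Einstein solution. The only cosmetic difference is that the paper proves Assertion~(1) by invoking Lemma~\ref{L3.1}(1) directly---writing $\rho_{s,\tilde\nabla}=\rho_{s,\nabla}-(m-1)(\mathcal{H}_\nabla g-dg\otimes dg)=0$ and then computing $\mathcal{H}_\nabla(e^{-g})$ by hand---whereas you instead invoke the isomorphism of Theorem~\ref{T1.3}(1) together with the observation that $1\in E(\mu_m,\tilde\nabla)$ characterizes $\rho_{s,\tilde\nabla}=0$. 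Since Theorem~\ref{T1.3}(1) is itself proved from Lemma~\ref{L3.1}, the two routes are the same computation dressed differently; your packaging is arguably cleaner and makes the bidirectional symmetry between (1) and (2) more transparent. Your sign bookkeeping (choosing $g=-\log f$ so that $e^gf=1$ lands in $E(\mu_m,\tilde\nabla)$ under the stated direction of the isomorphism) is correct.
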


Surface geometry is particularly tractable since the geometry is carried by the Ricci tensor. In this setting, $\mu_2=-1$ and one has
\begin{theorem}\label{T1.8}
Let $\mathcal{M}$ be an affine surface. Then $\dim\{E(-1,\nabla)\}\ne2$.
\end{theorem}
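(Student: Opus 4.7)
The plan is to argue at the germ level, showing that $\dim\{E(P,-1,\nabla)\}$ never equals $2$, so the global dimension cannot equal $2$ either. Suppose for contradiction that $\dim\{E(-1,\nabla)\}=2$. Since any nonzero element of $E(-1,\nabla)$ is nonzero somewhere, there exist $P\in M$ and $f\in E(-1,\nabla)$ with $f(P)\ne 0$, and by Theorem~\ref{T1.1}(3) the restriction of global solutions to germs at $P$ is injective, so $\dim\{E(P,-1,\nabla)\}\ge 2$. Apply Theorem~\ref{T1.7}(2) with $\mu_2=-1$: there is a connection $\tilde\nabla$ near $P$, strongly projectively equivalent to $\nabla$, with $\rho_{s,\tilde\nabla}=0$. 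If $\omega=dg$ realizes the equivalence, Theorem~\ref{T1.3}(1) provides an isomorphism $E(P,-1,\nabla)\cong E(P,-1,\tilde\nabla)$, and since $\rho_{s,\tilde\nabla}=0$ this last space coincides with $\{h:\tilde\nabla^2 h=0\}$. This kernel therefore has dimension at least $2$; constants fill a one-dimensional subspace, so there is a non-constant germ $h$ with $\tilde\nabla^2 h=0$, making $dh$ a nowhere-zero $\tilde\nabla$-parallel $1$-form near $P$.

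The heart of the argument is a short curvature computation in dimension two. Parallelism of $dh$ forces $R_{\tilde\nabla}(X,Y)Z\in\ker dh$ for all $X,Y,Z$. Choose a local frame $\{e_1,e_2\}$ with $e_1\in\ker dh$ and write $R_{\tilde\nabla}(X,Y)Z=\lambda(X,Y,Z)\,e_1$, where $\lambda$ is skew in its first two slots. Tracing yields $\rho_{\tilde\nabla}(Y,Z)=\lambda(e_1,Y,Z)$, and the assumption $\rho_{s,\tilde\nabla}=0$ forces $\lambda(e_1,\cdot,\cdot)$ to be skew in its last two slots as well. In dimension two, these two skew-symmetries together force every component of $\lambda(e_1,\cdot,\cdot)$ to vanish, and the skew-symmetry of $\lambda$ in its first two slots then propagates the vanishing to $\lambda(e_2,\cdot,\cdot)$. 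Hence $R_{\tilde\nabla}\equiv 0$, so $\tilde\nabla$ is flat near $P$, giving $\dim\{h:\tilde\nabla^2 h=0\}=3$ and therefore $\dim\{E(P,-1,\nabla)\}=3$; combined with Theorem~\ref{T1.1}(5) applied on a simply connected neighborhood of $P$, this contradicts $\dim\{E(-1,\nabla)\}=2$.

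The main obstacle is the local two-dimensional curvature computation showing that $\rho_{s,\tilde\nabla}=0$ together with a parallel $1$-form forces flatness; everything else is a direct invocation of Theorems~\ref{T1.3} and~\ref{T1.7}.
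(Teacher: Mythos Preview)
Your curvature computation is correct and is a genuine simplification over the paper's route: after reducing to $\tilde\nabla$ with $\rho_{s,\tilde\nabla}=0$, the paper splits into the cases $\rho_{a,\tilde\nabla}=0$ (immediate) and $\rho_{a,\tilde\nabla}\ne0$, the latter handled via the Wong--Derdzinski local normal form for surfaces with recurrent curvature and an explicit coordinate computation. Your observation that a nowhere-zero parallel $1$-form forces the range of $R_{\tilde\nabla}$ into a line, after which $\rho_{s,\tilde\nabla}=0$ together with the two skew-symmetries kills $R_{\tilde\nabla}$ entirely in dimension~$2$, treats both cases at once and avoids any external classification.

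The gap is in how you close the argument. You choose a single point $P$ where some global solution is nonzero and deduce $\dim\{E(P,-1,\nabla)\}=3$; invoking Theorem~\ref{T1.1}(5) on a simply connected neighborhood of $P$ only tells you the solution space \emph{on that neighborhood} is $3$-dimensional, and this does not contradict $\dim\{E(-1,\nabla)\}=2$ on $M$ (monodromy can obstruct global extension). The paper actually proves the pointwise statement $\dim\{E(P,-1,\nabla)\}\ne2$ for \emph{every} $P$, and for that one must also treat the case in which every element of $E(P,-1,\nabla)$ vanishes at $P$, so that Theorem~\ref{T1.7}(2) cannot be invoked at $P$ itself. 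The paper disposes of this degenerate case by continuity: at any nearby $Q$ some solution is nonzero, so your argument (or the paper's) gives germ dimension $3$ there and hence strong projective flatness near $Q$; the tensorial characterization of strong projective flatness via total symmetry of $\rho_\nabla$ and $\nabla\rho_\nabla$ (Lemma~\ref{L4.1}) then passes to the limit at $P$, forcing $\dim\{E(P,-1,\nabla)\}=3$ there as well. Your proposal is missing this boundary case.
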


In the appendix, we will discuss some results concerning surface geometry in more detail.
We will use Theorem~\ref{TC.2} to show there are affine connections $\nabla_i$ on $\mathbb{R}^+\times\mathbb{R}$ such that
$$
\dim\{E(-1,\nabla_1)\}=0,\quad
\dim\{E(-1,\nabla_2)\}=1,\quad\dim\{E(-1,\nabla_3)\}=3\,.
$$
Thus the remaining values can all be attained. In Example~\ref{EB.2},
we will discuss a family of $3$-dimensional affine manifolds where $\dim\{E(-\frac12,\nabla)\}$ can be $0$, $1$, $2$, and $4$
but is never $3$. This suggests that for general $m$ one could show that $\dim\{E(P,-\frac1{m-1},\nabla)\}\ne m$ so this value is forbidden.
Our research continues on this problem.

\subsection{Riemannian extensions} 
The Riemannian extension provides a procedure to transfer information from affine geometry into neutral signature geometry in a natural way.
Let $\mathcal{M}=(M,\nabla)$ be an affine manifold. If $(x^1,\dots,x^m)$ are local
coordinates on $M$, 
let $(y_1,\dots,y_m)$ be the corresponding dual coordinates on the cotangent bundle $T^*M$; if $\omega$ is a 1-form, we can express
$\omega=y_idx^i$. Let $\Phi$ be an 
auxiliary symmetric $(0,2)$-tensor field in $M$. Let $\Gamma_{ij}{}^k$ be the Christoffel symbols of the connection $\nabla$.
The {\it deformed Riemannian extension} is the neutral signature metric on $T^*M$ which  is defined by setting \cite{afifi, PW}:
\begin{equation}\label{E1.c}
g_{\nabla,\Phi}= dx^i \otimes dy_i+dy^i \otimes dx_i   +\left\{\Phi_{ij}-2y_k\, \Gamma_{ij}{}^k\right\}dx^i \otimes dx^j\,.
\end{equation}
This is invariantly defined \cite{walker-metrics}. Let $\pi$ be the canonical projection
from $T^*M$ to $M$. If $f\in C^\infty(M)$, then
$$
\mathcal{H}_{g_{\nabla,\Phi}}\pi^*f=\pi^*\mathcal{H}_\nabla f,\quad\rho_{g_{\nabla,\Phi}}=2\pi^*\rho_{s,\nabla},\quad\|d\pi^*f\|_{g_{\nabla,\Phi}}^2=0\,.
$$

Let $\mathcal{N}:=(N,g_N,\Psi,\mu)$ where $(N,g_N)$ is a pseudo-Riemannian manifold of dimension $n$, where
$\Psi\in C^\infty(N)$, and where
$\mu\in \mathbb{R}$. We say that $\mathcal{N}$ is a {\it quasi-Einstein manifold} if
\begin{equation}\label{E1.d}
\mathcal{H}_{g_N}\Psi+\rho_{g_N}-\mu\, d\Psi\otimes d\Psi=\lambda\, g_{N}\text{ for some }\lambda\in\mathbb{R}\,.
\end{equation}
One has the following link between deformed Riemannian extensions and quasi-Einstein structures \cite{BGGV17}:

\begin{theorem}
\ \begin{enumerate}
\item Let $\mathcal{M}=(M,\nabla)$ be an affine surface, let $\psi\in C^\infty(M)$, and let 
$\mu\in\mathbb{R}$. If $\mathcal{H}_\nabla \psi+2\rho_{s,\nabla}-\mu\, d\psi\otimes d\psi=0$,
then $(T^*M,g_{\nabla,\Phi},\pi^*\psi,\mu)$ is a
self-dual quasi-Einstein manifold  with $\| d\pi^*\psi\|_{g_{\nabla,\Phi}}^2=0$ and $\lambda=0$, for any~$\Phi$.
\item Let $(N,g_N,\Psi,\mu)$ be a self-dual quasi-Einstein  manifold of signature $(2,2)$ with $\mu\neq -\frac{1}{2}$ and $\| d\Psi\|^2_{g_N}=0$
which is not Ricci flat. Then $\lambda=0$ and $(N,g_N,\Psi,\mu)$ is locally isometric to a manifold which has the form given in
Assertion~(1).
\end{enumerate}\end{theorem}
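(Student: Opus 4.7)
\medskip\noindent\textbf{Proof proposal.} The two assertions go in opposite directions, so I will treat them separately.

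For Assertion~(1), the plan is a direct substitution into the quasi-Einstein equation~\eqref{E1.d} on $T^*M$. Using the three invariance identities listed after Equation~\eqref{E1.c}, one rewrites
\begin{equation*}
\mathcal{H}_{g_{\nabla,\Phi}}\pi^*\psi+\rho_{g_{\nabla,\Phi}}-\mu\, d\pi^*\psi\otimes d\pi^*\psi
=\pi^{*}\!\bigl(\mathcal{H}_\nabla\psi+2\rho_{s,\nabla}-\mu\, d\psi\otimes d\psi\bigr).
\end{equation*}
The right-hand side lives in the horizontal subspace spanned by the $dx^i\otimes dx^j$, while $g_{\nabla,\Phi}$ contains the non-degenerate pairing $dx^i\otimes dy_i+dy^i\otimes dx_i$; hence the equality to $\lambda g_{\nabla,\Phi}$ forces $\lambda=0$ and reduces the quasi-Einstein condition to the hypothesis on $M$. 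The null condition $\|d\pi^{*}\psi\|_{g_{\nabla,\Phi}}^{2}=0$ is already one of the three listed identities. Self-duality of the deformed Riemannian extension of a torsion-free affine surface is standard and can be quoted from the references on Walker and Riemannian-extension geometry, so this portion is essentially bookkeeping.

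For Assertion~(2), the strategy is to produce, from the data $(N,g_N,\Psi,\mu)$, a surface $(M,\nabla)$ and a function $\psi$ on $M$ so that $N$ is locally the cotangent bundle $T^*M$ with a deformed Riemannian extension and $\Psi=\pi^{*}\psi$. The hypothesis $\|d\Psi\|_{g_N}^2=0$ singles out the distribution $\mathcal{D}:=\ker d\Psi$, which contains the null gradient of $\Psi$. The first key step is to show that $\mathcal{D}$ is a parallel totally isotropic $2$-plane field, so that $(N,g_N)$ carries a Walker structure. This should follow by differentiating the quasi-Einstein equation along $\nabla\Psi$, using self-duality and the fact that $\mu\neq-\tfrac12$ to invert a certain algebraic operator on symmetric $2$-tensors at each point, and then invoking the non Ricci-flat hypothesis to guarantee that the resulting structure is non-degenerate in the sense needed to force parallelism. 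The second step is to pass to Walker coordinates $(x^i,y_i)$ in which $g_N$ takes the form $dx^i\otimes dy_i+dy_i\otimes dx^i+h_{ij}dx^i\otimes dx^j$ with $\Psi$ depending only on the $x^i$, so that $\psi:=\Psi\circ(\text{section})$ descends to a function on the leaf space $M$. The third step identifies the coefficients $h_{ij}$ as $\Phi_{ij}-2y_k\Gamma_{ij}{}^k$ for a torsion-free connection $\nabla$ on $M$: the linearity in $y_k$ is read off from the quasi-Einstein equation by inspecting the dependence of both sides on the fibre coordinates (this is where $\mu\neq-\tfrac12$ enters again, to force the higher $y$-dependence to vanish), and symmetry of the resulting Christoffel symbols in $(i,j)$ gives torsion-freeness. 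Finally, substituting back reduces the quasi-Einstein equation on $N$ to exactly $\mathcal{H}_\nabla\psi+2\rho_{s,\nabla}-\mu\, d\psi\otimes d\psi=0$, matching the form produced by Assertion~(1).

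The main obstacle is the rigidity step in Assertion~(2): extracting a Walker structure and then recognizing the metric as a \emph{deformed} Riemannian extension rather than a more general Walker metric. Both the parallelism of $\mathcal{D}$ and the affine-linear dependence of $h_{ij}$ on the fibre coordinates rely in an essential way on the combination of self-duality, the excluded value $\mu=-\tfrac12$, and the non Ricci-flat hypothesis; if any of these is relaxed the normal form degenerates. I would organize the argument so that these three hypotheses are each used exactly once in the sequence \emph{parallel null distribution}, \emph{Walker normal form}, \emph{recognition as Riemannian extension}.
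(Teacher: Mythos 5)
This theorem is not proved in the paper at all: it is imported verbatim from the reference \cite{BGGV17} (``Half conformally flat generalized quasi-Einstein manifolds''), so there is no in-paper argument to compare yours against. Judged on its own terms, your treatment of Assertion~(1) is fine: the three identities listed after Equation~(\ref{E1.c}) do reduce the quasi-Einstein tensor on $T^*M$ to the pullback $\pi^{*}(\mathcal{H}_\nabla\psi+2\rho_{s,\nabla}-\mu\,d\psi\otimes d\psi)$, which is purely horizontal, and since $g_{\nabla,\Phi}$ has the non-degenerate $dx^i\otimes dy_i$ block, the equation $\lambda g_{\nabla,\Phi}=0$-horizontal-tensor forces $\lambda=0$; self-duality of deformed Riemannian extensions of affine surfaces is indeed a standard fact from the Walker-metric literature. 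That half is essentially complete.

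Assertion~(2) is where the gap lies. Your outline correctly names the target (a parallel totally isotropic $2$-plane field, Walker coordinates, recognition of the metric as a deformed Riemannian extension with $\Psi$ pulled back from the base), but every load-bearing step is deferred with ``this should follow.'' The parallelism of the null distribution and the affine-linear dependence of the Walker coefficients $h_{ij}$ on the fibre coordinates are exactly the substantive content of the theorem; they require an explicit computation with the anti-self-dual Weyl tensor (self-duality means $W^-=0$, and contracting $W^-$ against $d\Psi$ and the quasi-Einstein equation is what produces the algebraic constraints in which $\mu=-\tfrac12$ appears as the degenerate value), together with the non-Ricci-flat hypothesis to rule out degenerate branches. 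None of that is carried out, and no candidate identity is even written down, so as it stands the hard direction is a plan rather than a proof. If you want to complete it you will need to reproduce the curvature analysis of \cite{BGGV17} rather than cite the shape of its conclusion.
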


We suppose $\mu\ne0$ and make the change of variables $f=e^{-\frac12\mu\,\psi}$. The equation 
$\mathcal{H}_\nabla \psi+2\rho_{s,\nabla}-\mu\, d\psi\otimes d\psi=0$ then becomes 
$\mathcal{H}_\nabla f=\mu\, f\,\rho_{s,\nabla}$.
This is the {\it affine quasi-Einstein equation} given in \eqref{E1.b}. 
Let $f>0$ be a smooth function on $M$. Express $\pi^*f=e^{-\mu\,F}$ for some $F\in C^\infty(T^*M)$ and $\mu\neq 0$. Then $F$ solves Equation~(\ref{E1.d}) in $(T^*M,g_{\nabla,\Phi})$ if and only if $f\in E(2\mu,\nabla)$.

\begin{remark}\rm
The eigenvalue $\mu_m=-\frac{1}{m-1}$, which plays a role in the projective structure of $(M,\nabla)$,  is linked to some geometric properties of the deformed Riemannian extensions $\mathcal{N}:=(T^*M,g_{\nabla,\Phi})$: if $\operatorname{dim}\{ E(\mu_m,\nabla)\}\geq 1$, then $\mathcal{N}$ is conformally Einstein \cite{BGGV17}.

Afifi \cite{afifi} showed that if a deformed Riemannian extension $g_{\nabla,\Phi}$ given by Equation~\eqref{E1.c} is locally conformally flat, then $\nabla$ is projectively flat with symmetric Ricci tensor. 
Theorem \ref{TC.2} shows the existence of surfaces with $\operatorname{dim}\{E(-1,\nabla)\}=1$. The corresponding deformed Riemannian extensions $(T^*M,g_{\nabla,\Phi})$ are conformally Einstein but not conformally flat for any deformation tensor $\Phi$ by Theorem \ref{T1.5}.
\end{remark}

\begin{remark}\rm
Two metrics in the same conformal class are said to be \emph{Liouville equivalent} if their Ricci tensors coincide (see \cite{EGR-KR, EGR-KR3}).
Let $\nabla$ and $\nabla^{dg}$ be strongly projectively equivalent connections. Then the corresponding Riemannian extensions $g_\nabla$ and $g_{\nabla^{dg}}$ are conformally equivalent
(just considering the transformation $(x^k,y_k)\mapsto (x^k,e^{2g}y_k)$). Moreover, $g_\nabla$ and $g_{\nabla^{dg}}$ are Liouville equivalent if and only if  $\mathcal{H}_\nabla g=dg\otimes dg$.
Therefore, from the pseudo-Riemannian point of view, affine Yamabe solitons  $\phi\in E(0,\nabla)$ determine Liouville transformations  of the Riemannian extension $g_\nabla$ (see Assertion (2) in Theorem \ref{T1.3}).

The projective deformations in Example \ref{E-A.2} and Example \ref{E-A.3} induce Liouville equivalent Riemannian extensions \cite{EGR-KR}. Therefore it follows from \cite[Corollary 2]{EGR-KR3} that none of the Riemannian extensions $g_\nabla$ and $g_{\nabla^{\omega}}$ is geodesically complete.
\end{remark}

\begin{remark}\rm
There is a close connection between quasi-Einstein structures and warped product Einstein metrics (see \cite{BGGV17} and references therein).
The warping function of any Einstein warped product is a solution of Equation~\eqref{E1.d} with $\mu=\frac{1}{k}$, $k\in\mathbb{N}$.
Conversely, if $f\in E(\frac1{2k},\nabla)$ for some positive integer $k$ and if $\mathcal{E}$ is a Ricci flat manifold of dimension $k$, then the warped product 
$\mathcal{N}\times_{\pi^*f}\mathcal{E}$ with base manifold $\mathcal{N}:=(T^*M,g_{\nabla,\Phi})$ is Ricci flat.
Theorem \ref{TA.1}-(3) and Theorem \ref{TC.3} show that there exist homogeneous surfaces with
$\operatorname{dim}\{E(\mu,\nabla)\}\geq 1$ for arbitrary $\mu=\frac{1}{2k}$.
\end{remark}
 
\section{The proof of Theorem~\ref{T1.1}}\label{S2}
We establish the assertions of Theorem~\ref{T1.1} seriatim. 
\subsection{Smoothness properties of solutions to Equation~(\ref{E1.b})} Introduce local coordinates $x=(x^1,\dots,x^m)$. Let 
$\mathfrak{Q}_{\mu,\nabla,ij}$ be the components of the quasi-Einstein operator of Equation~(\ref{E1.a}).
Let
$$
D_\mu:=\operatorname{Tr}\{ \mathfrak{Q}_{\mu,\nabla}\}=\sum_{i=1}^m\mathfrak{Q}_{\mu,\nabla,ii}
=\sum_{i=1}^m\partial_{x^ix^i}+\sum_{i=1}^m\sum_{j=1}^m\Gamma_{ii}{}^j\partial_{x^j}-\mu\sum_{i=1}^m\rho_{ii}\,.
$$
The operator $D_\mu$ is then an elliptic second order partial differential operator. Let $f\in C^{ 2}(M)$ satisfy $ \mathfrak{Q}_{\mu,\nabla} f=0$. One then has
$D_\mu f=0$ and standard elliptic theory shows $f\in C^\infty(M)$. Suppose in addition that 
the underlying structure is real analytic. It then follows that
$D_\mu$ is analytic-hypoelliptic and hence $D_\mu f=0$ implies $f$ is real analytic, see, for example, the discussion in \cite{C92,T71}.

\subsection{Affine Killing vector fields} Let $\Phi_t^X$ be the 1-parameter flow associated with an affine Killing vector field $X$.
Then $\Phi_t^X$ commutes with $\nabla$ and hence with $ \mathfrak{Q}_{\mu,\nabla}$ for all $t$. Thus if $f\in E(\mu,\nabla)$, then
$(\Phi_t^X)^*f\in E(\mu,\nabla)$ for any $t$. Differentiating this relation with respect to $t$ and setting $t=0$ then shows
$Xf\in E(\mu,\nabla)$ as desired.

\subsection{Initial conditions}
We wish to show that if $f\in E(P,\mu,\nabla)$, if $f(P)=0$, and if $df(P)=0$, then $f\equiv0$.
In the real analytic category, this is immediate as we can use Equation~(\ref{E1.b}) to show all the higher derivatives vanish.
Our task is to give a different derivation in the $C^\infty$ context. To simplify the discussion, we shall assume $m=2$. Introduce
local coordinates $(x^1,x^2)$ on $M$ centered at $P$. Let $B_\varepsilon(0)$ be the ball of radius $\varepsilon$ about the origin.
Assume that $f\in E(P,\mu,\nabla)$ satisfies $f(0)=df(0)=0$. We will show there exists $\varepsilon>0$ so that $f\equiv0$ on
$B_\varepsilon(0)$. Choose $T$ and $\varepsilon$ so that
\begin{equation}\label{E2.a}
\textstyle\frac13<T,\quad|\Gamma_{ij}{}^k(x)|\le T,\quad|\mu\,\rho_{s, \nabla,ij}(x)|\le T\text{ for all }x\in B_\varepsilon(0),\quad\varepsilon<\frac1{12T}\,.
\end{equation}
Let
$$
\|f\|_1:=\sup_{x\in B_\varepsilon(0)}\left\{|\partial_{x^1}f(x)|,|\partial_{x^2}f(x)|,|f(x)|\right\}\,.
$$
Let $\vec x=(a,b)\in B_\varepsilon(0)$. Let $\gamma(t)=t\vec x$. We use Equation~(\ref{E2.a}) to estimate:
\begin{eqnarray*}
\left|\partial_t\partial_{x^1}f\right|(t\vec x) &=&\left|a\partial_{x^1x^1}f+b\partial_{x^1x^2}f\right|(t\vec x)\le\left|a\partial_{x^1x^1}f\right|(t\vec x)
+\left|b\partial_{x^1x^2}f\right|(t\vec x)\\
&=&\phantom{+}|a|\cdot\left|\Gamma_{11}{}^1\partial_{x^1}f+\Gamma_{11}{}^2\partial_{x^2}f+f\mu\rho_{s\nabla,11}\right|(t\vec x)\\
&&+|b|\cdot\left|\Gamma_{12}{}^1\partial_{x^1}f+\Gamma_{12}{}^2\partial_{x^2}f+f\mu\rho_{s\nabla,12}\right|(t\vec x)\\
&\le&3(|a|+|b|)T\|f\|_1\le6\,\varepsilon\, T\|f\|_1\,.
\end{eqnarray*}
As $\partial_{x^1}f(0)=0$, we may use the Fundamental Theorem of Calculus to estimate:
$$
|\partial_{x^1}f(\vec x)|\le\int_{t=0}^1\left|\partial_t\partial_{x^1}f(t\vec x)\right|dt\le\int_{t=0}^16\,\varepsilon\, T\,\|f\|_1dt=6\,\varepsilon\, T\|f\|_1\,.
$$
We show similarly that $|\partial_{x^2}f(\vec x)|\le6\,\varepsilon\, T\|f_1\|$. 
Finally, since $\frac{1}{3}<T$ and since $f(0)=0$, we estimate
$$
\left|f(\vec x)\right|\le\int_{t=0}^1\left|\partial_tf(t\vec x)\right|dt\le\int_{t=0}^1(|a|+|b|)\|f\|_1dt\le2\,\varepsilon\,\|f\|_1\le 6\,\varepsilon\, T\|f\|_1\,.
$$
Consequently, $\|f\|_1\le 6\,\varepsilon\, T\|f\|_1$. Since $6\,\varepsilon\, T<\frac12$, $\|f\|_1\le\frac12\|f\|_1$. This
implies $\|f\|_1=0$ on $B_\varepsilon(0)$ and proves Theorem~\ref{T1.1}~(3).

\subsection{Estimating the dimension of $E(P,\mu,\nabla)$}
Let $f\in E(P,\mu,\nabla)$. By Theorem~\ref{T1.1}~(3),  $f$ is determined by $f(P)$ and $df(P)$. Assertion~(4) now follows.

\subsection{Extending solutions to Equation~(\ref{E1.b})} The
final assertion of Theorem~\ref{T1.1} follows using exactly the same arguments of ``analytic continuation" that were used to prove similar assertions
for Killing vector fields or affine Killing vector fields (see~\cite{N60}).
\qed

\section{Projective equivalence}\label{S3}
In what follows, it will be convenient to work with just one component. Suppose that $\Phi$ is a symmetric $(0,2)$-tensor
defined on some vector space $V$ and suppose that one could show that $\Phi_{11}=0$ relative to any basis. It then follows that $\Phi=0$;
this process is called {\it polarization}.
If $\mathcal{M}=(M,\nabla)$ is an affine manifold, then
\begin{eqnarray*}
&&R_{\nabla,ijk}{}^l=
\partial_{x^i}\Gamma_{jk}{}^l-\partial_{x^j}\Gamma_{ik}{}^l+\Gamma_{in}{}^l\Gamma_{jk}{}^n-\Gamma_{jn}{}^l\Gamma_{ik}{}^n,\\
&&\rho_{\nabla,jk}=
\partial_{x^i}\Gamma_{jk}{}^i-\partial_{x^j}\Gamma_{ik}{}^i+\Gamma_{in}{}^i\Gamma_{jk}{}^n-\Gamma_{jn}{}^i\Gamma_{ik}{}^n\,.
\end{eqnarray*}

\begin{lemma}\label{L3.1}
Let $\omega=dg$ provide a strong projective equivalence from $\nabla$ to $\tilde\nabla$.
\begin{enumerate}
\item $\rho_{s,\tilde\nabla}=\rho_{s,\nabla}-(m-1)\{\mathcal{H}_\nabla g-dg\otimes dg$\}. 
\item If $\mu=-\frac1{m-1}$ or if $\mathcal{H}_\nabla g-dg\otimes dg=0$, then $ \mathfrak{Q}_{\mu,\nabla} =e^{-g}\, {\mathfrak{Q}}_{\mu,\tilde{\nabla}}\, e^g$.
\end{enumerate}
\end{lemma}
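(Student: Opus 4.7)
The proof will proceed by direct coordinate computation; both assertions are local algebraic identities and polarization lets us work componentwise. The key input is that under a projective change, the Christoffel symbols transform by $\tilde\Gamma_{jk}{}^i - \Gamma_{jk}{}^i = \pm(\omega_j\delta_k^i + \omega_k\delta_j^i)$, so the contraction satisfies $\tilde\Gamma_{in}{}^i - \Gamma_{in}{}^i = \pm(m+1)\omega_n$. Throughout, the plan uses closedness of $\omega = dg$ (so $\partial_i\omega_j = \partial_j\omega_i = \partial_i\partial_j g$) and the identity $(\mathcal{H}_\nabla g)_{jk} = \partial_j\omega_k - \Gamma_{jk}{}^n\omega_n$.

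For Assertion (1), I would substitute the explicit expression for $\tilde\Gamma$ into the Ricci formula
\begin{equation*}
\rho_{jk} = \partial_i\Gamma_{jk}{}^i - \partial_j\Gamma_{ik}{}^i + \Gamma_{in}{}^i\Gamma_{jk}{}^n - \Gamma_{jn}{}^i\Gamma_{ik}{}^n
\end{equation*}
and expand. The linear-in-derivatives terms contribute $\mp(\partial_k\omega_j - m\partial_j\omega_k)$, which by closedness of $\omega$ collapses to $\pm(m-1)\partial_j\partial_k g$. The quadratic terms, after the Kronecker contractions generate coefficients like $(m+1)$, $(m+3)$, $2(m+1)$ that combine, yield $\mp(m-1)\Gamma_{jk}{}^n\omega_n$ together with $(m-1)\omega_j\omega_k$. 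Combining the two $(m-1)$ contributions recognizes $(\mathcal{H}_\nabla g)_{jk}$, and passing to the symmetric part (the antisymmetric piece of $\partial_j\omega_k$ vanishes anyway) gives the claimed formula.

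For Assertion (2), the plan is to compute $e^{-g}\mathcal{H}_{\tilde\nabla}(e^g f)$ and then combine with (1). Applying the product rule to $\partial_j\partial_k(e^g f)$ and expanding $\tilde\Gamma_{jk}{}^n\partial_n(e^g f)$ via the projective change, the cross terms $\omega_j\partial_k f + \omega_k\partial_j f$ that appear from both contributions cancel exactly, leaving
\begin{equation*}
e^{-g}\mathcal{H}_{\tilde\nabla}(e^g f) = \mathcal{H}_\nabla f + f\{\mathcal{H}_\nabla g - dg\otimes dg\}.
\end{equation*}
Substituting this together with the Ricci formula from (1) into $\mathfrak{Q}_{\mu,\tilde\nabla}(e^g f) = \mathcal{H}_{\tilde\nabla}(e^g f) - \mu(e^g f)\rho_{s,\tilde\nabla}$ and dividing by $e^g$ produces, after one telescoping cancellation,
\begin{equation*}
e^{-g}\mathfrak{Q}_{\mu,\tilde\nabla}(e^g f) = \mathfrak{Q}_{\mu,\nabla} f + \bigl[1 + \mu(m-1)\bigr]\, f\,\{\mathcal{H}_\nabla g - dg\otimes dg\}.
\end{equation*}
Under either hypothesis --- $\mu = -1/(m-1)$ kills the scalar factor, or the tensor factor $\mathcal{H}_\nabla g - dg\otimes dg$ vanishes --- the correction drops and the intertwining identity $\mathfrak{Q}_{\mu,\nabla} = e^{-g}\mathfrak{Q}_{\mu,\tilde\nabla}\,e^g$ follows.

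I expect the only real obstacle to be the bookkeeping in Assertion (1): correctly tracking the numerous terms from expanding $\tilde\Gamma_{in}{}^i\tilde\Gamma_{jk}{}^n - \tilde\Gamma_{jn}{}^i\tilde\Gamma_{ik}{}^n$ (including the trace contractions $\Gamma_{in}{}^i$ at index $j$ versus $k$) and verifying that the many coefficients conspire to produce the single factor $(m-1)$ in the final answer. Once (1) is in hand, Assertion (2) is essentially an algebraic consequence: the delicate pointwise tensor cancellation needed is precisely the one engineered by the coefficient $-(m-1)$ appearing in (1) matching $[1 + \mu_m(m-1)]$ in (2).
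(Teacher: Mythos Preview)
Your proposal is correct and follows the same overall strategy as the paper: a direct coordinate computation of how the Ricci tensor and the Hessian transform under the projective change, combined via the defining formula for $\mathfrak{Q}_{\mu}$ to isolate the correction term $[1+(m-1)\mu]\,f\,\{\mathcal{H}_\nabla g - dg\otimes dg\}$. Your intermediate identity $e^{-g}\mathcal{H}_{\tilde\nabla}(e^g f)=\mathcal{H}_\nabla f + f\{\mathcal{H}_\nabla g - dg\otimes dg\}$ and the final displayed formula match exactly what the paper obtains.

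The one tactical difference worth noting is that the paper sidesteps the bookkeeping you flag as the ``only real obstacle'': it fixes a point $P$, chooses coordinates with $\Gamma(P)=0$, and computes only the $(1,1)$ component at $P$ before polarizing. With $\Gamma(P)=0$ all the mixed $\Gamma\cdot\omega$ terms in the quadratic expansion of $\tilde\Gamma_{in}{}^i\tilde\Gamma_{jk}{}^n-\tilde\Gamma_{jn}{}^i\tilde\Gamma_{ik}{}^n$ vanish at $P$, so one only has to track the pure $\omega\cdot\omega$ pieces (giving $2(m+1)(\partial_{x^1}g)^2-(m+3)(\partial_{x^1}g)^2=(m-1)(\partial_{x^1}g)^2$) and the derivative pieces. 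Your approach carries the full $\Gamma$'s through and must therefore also produce the $-(m-1)\Gamma_{jk}{}^n\omega_n$ term that completes $\mathcal{H}_\nabla g$; this is correct but is precisely the extra layer of bookkeeping that the normal-coordinates trick avoids.
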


\begin{proof}
Assume $\tilde\nabla_XY=\nabla_XY+dg(X)Y+dg(Y)X$, i.e.
$$
\tilde\Gamma_{ij}{}^k=\Gamma_{ij}{}^k+\delta_i^k\,\partial_{x^j}g+\delta_j^k\,\partial_{x^i}g\,.
$$
Fix a point $P$ of $M$. Since we are working in the category
of connections without torsion, we can choose a coordinate system so $\Gamma(P)=0$. We compute at the point $P$ and set 
$\Gamma_{ij}{}^k(P)=0$ to see
$$
\begin{array}{rcl}
\rho_{\tilde{\nabla,}11}(P) &=&\{\partial_{x^i}\tilde\Gamma_{11}{}^i-\partial_{x^1}\tilde\Gamma_{i1}{}^i +\tilde\Gamma_{in}{}^i\tilde\Gamma_{11}{}^n
-\tilde\Gamma_{1n}{}^i\tilde\Gamma_{i1}{}^n\}(P)\\
\noalign{\medskip}
\quad 
&=&\{\partial_{x^i}\Gamma_{11}{}^i-\partial_{x^1}\Gamma_{i1}{}^i +(1-m)\partial_{x^1x^1}g\\
\noalign{\medskip}
\quad 
&&
\phantom{\{\partial_{x^i}\Gamma_{11}{}^i}
+2(m+1)(\partial_{x^1}g)^2-(m+3)(\partial_{x^1}g)^2\}(P)\\
\noalign{\medskip}
\quad 
&=&\{\rho_{\nabla, 11}+(m-1)((\partial_{x^1}g)^2-\partial_{x^1x^1}g)\}(P)\\
\noalign{\medskip}
\quad
&=&\{\rho_\nabla-(m-1)(\mathcal{H}_\nabla g-dg\otimes dg)\}_{11}(P)\,.
\end{array}
$$
Polarizing this identity establishes Assertion~(1).  To prove Assertion~(2),  
we examine
$\mathfrak{Q}_{\mu,\nabla,11}$ and $\{e^{-g}\,{\mathfrak{Q}_{\mu,\tilde\nabla}}\,e^g\}_{11}$ at $P$. 
We compute:
$$
\begin{array}{l}
\{e^{-g}\mathcal{H}_{\tilde\nabla,11}e^gf\}(P)
=\{e^{-g}\partial_{x^1x^1}(fe^g)-\tilde\Gamma_{11}{}^ke^{-g}\partial_{x^k}(fe^g)\}(P)
\\[0.05in]
\quad
=\{\partial_{x^1x^1}f+2\partial_{x^1}f\partial_{x^1}g +f\partial_{x^1x^1}g+f(\partial_{x^1}g)^2 -2\partial_{x^1}g(\partial_{x^1}f+f\partial_{x^1}g)\}(P)
\\[0.05in]\quad
=\{\mathcal{H}_{\nabla,11}f+f(\partial_{x^1x^1}g-(\partial_{x^1}g)^2)\}(P).
\end{array}
$$
We complete the proof by polarizing the resulting identity:
\medbreak\qquad
$\{e^{-g}\mathfrak{Q}_{\mu,\tilde\nabla}\, e^g f- \mathfrak{Q}_{\mu,\nabla} f\}_{11}(P)$
\medbreak\qquad\quad
$=\{e^{-g}(\mathcal{H}_{\tilde\nabla}e^gf-\mu\rho_{s,\tilde{\nabla}}e^gf)_{11}
-(\mathcal{H}_{\nabla}f-\mu\rho_{s,\nabla}f)_{11}\}(P)$
\medbreak\qquad\quad
$=\{f(1+(m-1)\mu)(\partial_{x^1x^1}g-(\partial_{x^1}g)^2)\}(P)$.
\end{proof}
\subsection{Proof of Theorem~\ref{T1.3}}\label{S3.1}
Theorem~\ref{T1.3}~(1) is immediate from the intertwining relation of
Lemma~\ref{L3.1}~(2). The equivalence of Assertion~(2a) and Assertion~(2b) follows from Lemma~\ref{L3.1}~(1). The equivalence
of Assertion~(2b) and Assertion~(2c) follows by noting
$$
\mathfrak{Q}_{0,\nabla}(e^{-g})=\mathcal{H}_\nabla (e^{-g})=-e^{-g}\{\mathcal{H}_\nabla (g)-dg\otimes dg\}\,.
$$
Assertion~(3) now follows from Assertion~(2b) and Lemma~\ref{L3.1}~(2).\qed

\subsection{Proof of Theorem~\ref{T1.5}}\label{S3.2} Let $\mu_m:=-\frac1{m-1}$.
To prove Assertion~(1), we suppose that $\mathcal{M}$ is strongly projectively flat, i.e. $\nabla$ is strongly projectively equivalent
to a flat connection $\tilde\nabla$. Under this assumption, there are local coordinates around $P\in M$ so that the Christoffel symbols
$\tilde{\Gamma}_{ij}{}^k$ vanish identically. Thus,
$$
E(P,\mu_m,\tilde\nabla)=\operatorname{Span}\{1,x^1,\dots,x^m\}\,.
$$
Consequently, by Theorem~\ref{T1.3}, $\dim\{E(\mu_m,\nabla)\}=\dim\{E(\mu_m,\tilde\nabla)\}=m+1$. 
Next, assume that $\dim\{E(P,\mu,\nabla)\}=m+1$ for some $\mu$.
If $\phi\in E(P,\mu,\nabla)$, let
$$
\Theta(\phi):=(\phi,\partial_{x^1}\phi,\dots,\partial_{x^m}\phi)(P)\in\mathbb{R}^{m+1}\,.
$$
This vanishes if and only if $\phi\equiv0$. For dimensional reasons, $\Theta$ must be an isomorphism. 
Let $e_i$ be the standard basis for $\mathbb{R}^{m+1}$ and let $\phi_i=\Theta^{-1}(e_i)$
be the corresponding basis for $E(P,\mu,\nabla)$. Since $\Theta(\phi_i)=e_i$, we have
$$\begin{array}{ccccc}
\phi_0(P)=1,&\partial_{x^1}\phi_0(P)=0,&\partial_{x^2}\phi_0(P)=0,&\dots,&\partial_{x^m}\phi_0(P)=0,\\
\phi_1(P)=0,&\partial_{x^1}\phi_1(P)=1,&\partial_{x^2}\phi_1(P)=0,&\dots,&\partial_{x^m}\phi_1(P)=0,\\
\phi_2(P)=0,&\partial_{x^1}\phi_{2}(P)=0,&\partial_{x^2}\phi_{2}(P)=1,&\dots,&\partial_{x^m}\phi_{2}(P)=0,\\
\dotfill&\dotfill&\dotfill&\dotfill&\dotfill,\\
\phi_m(P)=0,&\partial_{x^1}\phi_{m}(P)=0,&\partial_{x^2}\phi_{m}(P)=0,&\dots,&\partial_{x^m}\phi_{m}(P)=1.
\end{array}$$
Set
$z^1:=\phi_1/\phi_0$, \dots, $z^m:=\phi_m/\phi_0$. We then have
$$\begin{array}{ccccc}
z^1(P)=0,&\partial_{x^1}z^1(P)=1,&\partial_{x^2}z^1(P)=0,&\dots,&\partial_{x^m}z^1(P)=0,\\
z^2(P)=0,&\partial_{x^1}z^{2}(P)=0,&\partial_{x^2}z^{2}(P)=1,&\dots,&\partial_{x^m}z^{2}(P)=0,\\
\dotfill&\dotfill&\dotfill&\dotfill&\dotfill\\
z^m(P)=0,&\partial_{x^1}z^{ m}(P)=0,&\partial_{x^2}z^{ m}(P)=0,&\dots,&\partial_{x^m}z^{ m}(P)=1.
\end{array}$$
Thus $\vec z(P)=0$ and $d\vec z(P)=\operatorname{id}$. Hence this is an admissible change of coordinates 
centered at $P$. Set $g=\log(\phi_0)$.
We then obtain 
\begin{equation}\label{E3.a}
E(P,\mu,\nabla)=e^g\operatorname{Span}\{1,z^1,\dots,z^m\}\,.
\end{equation}
We have that
$$
\mathfrak{Q}_{\mu,\nabla}(e^g)=\mathfrak{Q}_{\mu,\nabla}(\phi_0)=0\text{ and }\mathfrak{Q}_{\mu,\nabla}(z^ke^g)=\mathfrak{Q}_{\mu,\nabla}(\phi_k)=0\,.
$$
We set $e^{-g}\{z^k\mathfrak{Q}_{\mu,\nabla}(e^g)- \mathfrak{Q}_{\mu,\nabla}(z^ke^g)\}=0$ and examine
the resulting relations. Fix $i$, $j$, and $k$. We compute:
\medbreak\quad $e^{-g}\left\{\partial_{ z^i}\partial_{z^j}(z^ke^g)- z^k\partial_{ z^i}\partial_{ z^j}(e^g)\right\}=\delta_j^k\,{ \partial_{z^i}}g+\delta_i^k\,{\partial_{z^j}}g$,
\medbreak\quad $-e^{-g}\Gamma_{ij}{}^\ell\left\{\partial_{ z^\ell}( z^ke^g)- z^k\partial_{ z^\ell}(e^g)\right\}=-\Gamma_{ij}{}^k$,
\medbreak\quad 
$e^{-g}\left\{\mu\rho_{ s,\nabla}( z^ke^g)- z^k\mu\rho_{s,\nabla} e^g\right\}=0$,
\medbreak\quad $0=e^g\left\{\mathfrak{Q}_{\mu,\nabla}( z^ke^g)- z^k\mathfrak{Q}_{\mu,\nabla}(e^g)\right\}_{ij}=\delta_j^k\,{ \partial_{z^i}}g+\delta_i^k\,{\partial_{z^j}}g-\Gamma_{ij}{}^k$.
\medbreak\noindent 
Let $\tilde\Gamma_{ij}{}^k=0$ define a flat connection $\tilde\nabla$. We have 
$\Gamma_{ij}{}^k=\tilde\Gamma_{ij}{}^k+\delta_j^k\,{ \partial_{z^i}}g+\delta_i^k\,{\partial_{z^j}}g$ so $dg$ provides a strong projective equivalence from
$\tilde\nabla$ to $\nabla$. Consequently, $\nabla$ is strongly projectively flat. This establishes Assertions (1) and (2). 

Furthermore, by Theorem~\ref{T1.3}, $\tilde f\rightarrow e^g\tilde f$ is an isomorphism from $E(P,\mu_m,\tilde\nabla)$ to
$E(P,\mu_m,\nabla)$. Since $1\in E(P,\mu_m,\tilde\nabla)$,  $e^g\in E(P,\mu_m,\nabla)$. By Equation~(\ref{E3.a}),
$e^g\in E(P,\mu,\nabla)$. This means
$\mathcal{H}_\nabla e^g=e^g\mu_m\rho_{s,\nabla}$ and $\mathcal{H}_\nabla e^g=e^g\mu\rho_{s,\nabla}$. Since $\mu\ne\mu_m$, 
this implies $\rho_{s,\nabla}=0$. Since $\tilde\nabla$ is flat, $\rho_{a,\tilde\nabla}=0$. By Remark~\ref{R1.4}, 
the alternating Ricci tensor is preserved by strong projective equivalence. Consequently,
$\rho_{a,\nabla}=0$ as well. This implies $\nabla$ is Ricci flat which establishes Assertion~(3).
Assertion~(4) follows from the discussion given above.\qed

\subsection{The proof of Theorem~\ref{T1.7}}\label{S3.3} Suppose $dg$ provides a strong projective equivalence from
$\nabla$ to a connection $\tilde\nabla$ with $\rho_{s,\tilde\nabla}=0$. We use Lemma~\ref{L3.1} to see that 
$\mathcal{H}_\nabla g-dg\otimes dg=\frac1{m-1}\rho_{s,\nabla}$. Set $f=e^{-g}$. Then 
$$
\textstyle\mathcal{H}_\nabla f=e^{-g}\{-\mathcal{H}_\nabla g+dg\otimes dg\}=-\frac1{m-1}f\rho_{s,\nabla}
$$
so $f\in E(\mu_m,\nabla)$ is non-trivial. This establishes Assertion~(1) of Theorem~\ref{T1.7}. Conversely, of course, if $f\in E(P,\mu_m,\nabla)$
satisfies $f(P)\ne0$, then we may assume $f(P)>0$ and set $g=-\log(f)$. Reversing the argument then establishes Assertion~(2) of
Theorem~\ref{T1.7}.\qed

\subsection{The proof of Theorem~\ref{T1.8}}

Let $m=2$ and $\mu_2=-1$. Suppose to the contrary that $\dim\{E(P,-1,\nabla)\}=2$; we argue for a contradiction. 
Suppose first that $f(P)>0$ for some $f\in E(P,-1,\nabla)$.
Express $f=e^g$ near $P$. Let $-dg$ provide a strong projective equivalence from $\nabla$ to $\tilde\nabla$.
By Theorem~\ref{T1.3}, $1=e^{-g}f\in E(P,-1,\tilde\nabla)$. It now follows that $\rho_{s,\tilde\nabla}=0$. By Remark~\ref{R1.4},
$\rho_{a,\nabla}=\rho_{a,\tilde\nabla}$. Thus if $\rho_{a,\nabla}=0$, then $\tilde\nabla$ is Ricci flat and hence, since $m=2$, $\tilde\nabla$ is flat. 
Consequently, 
we apply Theorem~\ref{T1.5} to conclude $\dim\{E(P,-1,\nabla)\}=3$ contrary to our assumption. 

We suppose therefore that $\rho_{a,\tilde\nabla}=\rho_{a,\nabla}$ is non-trivial and $\rho_{s,\tilde\nabla}=0$.
Hence $\rho_{a,\tilde\nabla}$ defines a nonzero two-form, which shows that the curvature tensor is recurrent. Thus $(M,\tilde\nabla)$ is locally described
by the work of Wong \cite[Theorem 4.2]{W64}. Recently, Derdzinski \cite[Theorem 6.1]{derdzinski} has shown that local coordinates can be specialized so that
the only non-zero Christoffel symbols are 
$\Gamma_{11}{}^1 =-\partial_{x^1}\phi$ and $\Gamma_{22}{}^2 =\partial_{x^2}\phi$. We have
$E(P,\mu_2,\tilde\nabla)=\ker(\mathcal{H}_{\tilde\nabla})$. Since this is, by assumption, 2-dimensional, we can apply Theorem~\ref{T1.1} to choose
$\tilde f\in\ker(\mathcal{H}_{\tilde\nabla})$ so that $d\tilde f(P)\ne0$.  We compute
$$
\begin{array}{l}
0=\mathcal{H}_{\tilde\nabla,11}\tilde f={\partial_{x^1x^1}}\tilde f+{\partial_{x^1}}\phi\, {\partial_{x^1}}\tilde f,\,\,\,\,
0=\mathcal{H}_{\tilde\nabla,22}\tilde f={\partial_{x^2x^2}}\tilde f{ -}{\partial_{x^2}}\phi\, {\partial_{x^2}}\tilde f,\\
\noalign{\medskip}
0=\mathcal{H}_{\tilde\nabla,12}\tilde f={\partial_{x^1x^2}}\tilde f\,.
\end{array}
$$
The relation ${\partial_{x^1x^2}}\tilde f=0$ implies $\tilde f(x^1,x^2)=a(x^1)+b(x^2)$.
Differentiating the remaining relations with respect to $x^2$ and $x^1$, respectively, yields
$$
{\partial_{x^1x^2}\phi} \, a^\prime(x^1)=0\text{ and }-{\partial_{x^1x^2}\phi} \, b^\prime(x^2)=0\,.
$$
By assumption, $d\tilde f(P)\ne0$ and thus $(a^\prime(0),b^\prime(0))\ne(0,0)$.
Thus ${\partial_{x^1x^2}\phi}$ vanishes identically at $P$. This implies the geometry is flat and $\rho_{a,\tilde\nabla}=0$ contrary to our assumption.

Suppose $f(P)=0$ for every $f\in E(P,-1,\nabla)$ and $\dim\{E(P,-1,\nabla)\}=2$. Let $\{f_1,f_2\}$ be a basis for $E(P,-1,\nabla)$.
Since $f_i(P)=0$, we may apply Theorem~\ref{T1.1} to see $df_1(P)$ and $df_2(P)$ are linearly independent. Thus we can choose local
coordinates centered at $P$ so that $E(P,-1,\nabla)=\operatorname{Span}\{x^1,x^2\}$. If $Q\ne P$, then $\dim\{E({ Q},-1,\nabla)\}\ge2$ and
there exists a non-vanishing element $f_Q$ of $E(Q,-1,\nabla)$ with $f_Q(Q)\ne0$. The argument given above shows that $\dim\{E(Q,-1,\nabla)\}=3$.
  Thus $\nabla$ is strongly projectively flat near $Q$
 so by Lemma~\ref{L4.1}~(1), $\rho_\nabla$ and $\nabla\rho_\nabla$ are totally symmetric at $Q$. Thus, by continuity, the same holds at $P$.
 Thus by Lemma~\ref{L4.1}~(2), we can conclude that $\nabla$ is strongly projectively flat on a neighborhood of $P$ and $\dim\{E(P,-1,\nabla)\}=3$ contrary to our assumption.

\appendix\section{Locally homogeneous affine surfaces}

We say that $\mathcal{M}=(M,\nabla)$ is {\it locally homogeneous} if, given any two points of $M$, there is the germ of a diffeomorphism
$T$ taking one point to another with $T^*\nabla=\nabla$. Locally homogeneous affine surfaces have
been classified by Opozda \cite{Op04}.
Let $\mathcal{M}$ be a locally homogeneous affine surface which is not flat, i.e. has non-vanishing
Ricci tensor. Then at least one of the following
three possibilities holds, which are not exclusive, and which describe the local geometry:
\smallbreak\noindent{\bf Type~$\mathcal{A}$}: There exist local coordinates $(x^1,x^2)$ so that
$\Gamma_{ij}{}^k$ are constant.
\smallbreak\noindent{\bf Type~$\mathcal{B}$:} There exist local coordinates $(x^1,x^2)$ so that
$\Gamma_{ij}{}^k=(x^1)^{-1}C_{ij}{}^k$ where $C_{ij}{}^k$ are constant.
\smallbreak\noindent{\bf Type~$\mathcal{C}$}: $\nabla$ is the Levi-Civita connection of a metric of constant sectional curvature.

\medskip
In Section~\ref{S-A} and Section~\ref{S-C}, we present 2 and 3-dimensional solutions to the affine quasi-Einstein equation \eqref{E1.b} which are Type~$\mathcal{A}$ and Type~$\mathcal{B}$ geometries. Our account here is purely expository
to illustrate some of the phenomena which occur; we shall postpone the proofs of these results for a subsequent paper \cite{BGGV17x}.
In each  case  we consider the essentially different  eigenvalues  $\mu=0$, $\mu_m=-\frac{1}{m-1}$, and $\mu\neq 0, -\frac{1}{m-1}$  separately. 

\subsection{Type $\mathcal{A}$ surfaces}\label{S-A}
Let $\mathcal{M}=(\mathbb{R}^2,\nabla)$ be a Type~$\mathcal{A}$ surface model which is not flat; the Christoffel symbols satisfy
$\Gamma_{ij}{}^k=\Gamma_{ji}{}^k\in\mathbb{R}$. 
Any Type $\mathcal{A}$ surface is projectively flat with symmetric Ricci tensor \cite{BGG16}, thus strongly projectively flat.

\begin{theorem}\label{TA.1}
Let $\mathcal{M}$ be a Type~$\mathcal{A}$ surface model. 
\begin{enumerate}
\item Let $\mu=0$. Then $E(0,\nabla)=\operatorname{Span}\{1\}$ or, up to linear equivalence, one of the following holds:
\begin{enumerate}
\item $\Gamma_{11}{}^1=1$, $\Gamma_{12}{}^1=0$, $\Gamma_{22}{}^1=0$, and $E(0,\nabla)=\operatorname{Span}\{1,e^{x^1}\}$.
\smallbreak\item $\Gamma_{11}{}^1=\Gamma_{12}{}^1=\Gamma_{22}{}^1=0$, and $E(0,\nabla)=\operatorname{Span}\{1,x^1\}$.
\end{enumerate}
\item Let $\mu=-1$. Then $\dim\{E(-1,\nabla)\}=3$.
\item Let $\mu\ne 0,-1$. Then $\dim\{E(\mu,\nabla)\}=\left\{\begin{array}{ll}
2\text{ if } \operatorname{Rank}\{\rho_{\nabla}\}=1\\
0\text{ if } \operatorname{Rank}\{\rho_{\nabla}\}=2
\end{array}\right\}$.
\end{enumerate}\end{theorem}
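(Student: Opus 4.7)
The plan is to exploit the fact that on a Type~$\mathcal{A}$ model both the Christoffels $\Gamma_{ij}{}^k$ and the Ricci tensor $\rho_{s,\nabla}$ are constant in the chosen coordinates, so the quasi-Einstein equation $\mathcal{H}_\nabla f=\mu f\rho_{s,\nabla}$ becomes a linear PDE with constant coefficients. Encoding a solution by its $1$-jet $F:=(f,\partial_{x^1}f,\partial_{x^2}f)^T$ converts the equation into a first-order system $\partial_{x^a}F=A_a F$ with two constant matrices $A_1,A_2\in M_3(\mathbb{R})$ assembled from $\Gamma$ and $\mu\rho_{s,\nabla}$. By Theorem~\ref{T1.1}(4), the dimension of $E(P,\mu,\nabla)$ is at most $3$ and equals the dimension of the largest subspace of $\mathbb{R}^3$ on which this overdetermined system integrates consistently.

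Assertion~(2) is immediate from Theorem~\ref{T1.5}(1): every Type~$\mathcal{A}$ surface is strongly projectively flat (as recorded just before the theorem), so with $m=2$ and $\mu_2=-1$ we obtain $\dim\{E(-1,\nabla)\}=3$.

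For Assertion~(1) observe that $\mathcal{H}_\nabla f=0$ is equivalent to $\nabla(df)=0$, i.e.\ $df$ is a parallel $1$-form; hence $\dim\{E(0,\nabla)\}=1+k$, where $k$ is the dimension of the space of parallel $1$-forms. The condition $\partial_{x^j}v_i=\Gamma_{ij}{}^k v_k$ is again constant-coefficient linear, and closedness of $v$ follows from the symmetry of $\Gamma$. A global frame of parallel $1$-forms would force $\nabla$ flat, so the non-flatness hypothesis gives $k\in\{0,1\}$. I would then enumerate the classification of Type~$\mathcal{A}$ non-flat models up to linear equivalence and, in each, decide whether a nonzero parallel $1$-form exists; the only sub-cases with $k=1$ turn out to be those listed in (1a) and (1b), with explicit parallel forms $e^{x^1}dx^1$ (integrating to $e^{x^1}$) and $dx^1$ (integrating to $x^1$).

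For Assertion~(3), let $\mu\ne 0,-1$. Theorem~\ref{T1.5}(3) shows that $\dim\{E(\mu,\nabla)\}=3$ forces $\nabla$ Ricci flat, contradicting $\operatorname{Rank}\{\rho_\nabla\}\ge 1$, so $\dim\le 2$. I would substitute the exponential ansatz $f=e^{\alpha_1 x^1+\alpha_2 x^2}$ and reduce the PDE to the quadratic algebraic system
$$
\alpha_i\alpha_j-\Gamma_{ij}{}^k\alpha_k=\mu\rho_{ij},\qquad i,j=1,2.
$$
If $\operatorname{Rank}\{\rho_{s,\nabla}\}=2$, this over-determined system in $(\alpha_1,\alpha_2)$ is incompatible, and a commutator analysis of $A_1,A_2$ rules out the remaining non-exponential solutions, yielding $\dim=0$. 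If $\operatorname{Rank}\{\rho_{s,\nabla}\}=1$, a linear change of coordinates normalizes $\rho_{s,\nabla}$ so that only $\rho_{11}\ne 0$; the algebraic system then admits a $1$-parameter family of solutions, and a direct integration of the reduced ODE produces a further independent solution spanning a $2$-dimensional solution space. The main obstacle is the rank-$2$ subcase of~(3): one must rule out not merely exponential solutions but arbitrary smooth ones, by showing that the obstruction $[A_1,A_2]$ together with the non-degeneracy of $\rho_{s,\nabla}$ admits no invariant subspace of $\mathbb{R}^3$ on which the system propagates consistently for $\mu\ne 0,-1$.
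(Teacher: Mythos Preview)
The paper does not prove Theorem~\ref{TA.1}. Immediately before Section~\ref{S-A} the authors write: ``Our account here is purely expository to illustrate some of the phenomena which occur; we shall postpone the proofs of these results for a subsequent paper \cite{BGGV17x}.'' Theorem~\ref{TA.1} is stated without proof, so there is no argument in the paper to compare your proposal against.

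That said, your Assertion~(2) argument is complete and correct: the paper records (just above the theorem) that every Type~$\mathcal{A}$ surface is strongly projectively flat, and Theorem~\ref{T1.5}(1) then gives $\dim\{E(-1,\nabla)\}=3$ directly. Your framework for Assertion~(1) via parallel $1$-forms is sound, and the observation that two independent parallel $1$-forms would force flatness is the right obstruction; but ``enumerate the classification and check'' is not yet a proof --- you have not actually exhibited that enumeration or carried out the check, and the statement asserts a normal form \emph{up to linear equivalence}, so you would need to produce the change of basis in each case. For Assertion~(3) your exponential ansatz and the first-order system $\partial_{x^a}F=A_aF$ are the natural tools, but the rank-$2$ case is genuinely incomplete: you yourself flag that ``a commutator analysis of $A_1,A_2$ rules out the remaining non-exponential solutions'' without performing it, and in the rank-$1$ case you assert that ``a direct integration \dots\ produces a further independent solution'' without showing it. These are the substantive steps; as written the proposal is an outline rather than a proof.
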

 
 We can use Theorem~\ref{T1.3} to construct non-trivial projective deformations.
 \begin{example}\rm\label{E-A.2}
 We set
$\Gamma_{11}{}^1=1$, $\Gamma_{12}{}^1=0$, $\Gamma_{22}{}^1=0$ as in Theorem \ref{TA.1}-(1.a).  Then $\phi(x^1,x^2)=a+e^{x^1}\in E(0,\nabla)$. Following Theorem \ref{T1.3},
set $g=-\log\phi(x^1,x^2)$ and consider the strongly projectively equivalent connection  $\tilde\nabla$ determined by the 1-form $\omega=dg$. We have $\rho_{\nabla}=\rho_{\tilde{\nabla}}=\rho_{\nabla,11}dx^1\otimes dx^1$;
both $\nabla\rho_{\nabla}$ and $\tilde\nabla\rho_{\tilde\nabla}$ are multiples of $dx^1\otimes dx^1\otimes dx^1$. 
Thus  $\alpha:=\nabla\rho^2_{111}\cdot\rho^{-3}_{11}$ is an affine invariant (see \cite{BGG16}) and we have
$\alpha_{\nabla}=4(\Gamma_{12}{}^2-(\Gamma_{12}{}^2)^2+\Gamma_{11}{}^2\Gamma_{22}{}^2)^{ {-1}}$ and
$\alpha_{\tilde{\nabla}}=\alpha_{\nabla}\cdot(a-e^{x^1})^2(a+e^{x^1})^{-2}$.
Since $\alpha_{\tilde{\nabla}}$ is non-constant for $a\ne0$, we are getting affine inequivalent surfaces which are strongly Liouville equivalent. If $a=0$, we obtain an isomorphic Type~$\mathcal{A}$
structure.
\end{example}

 \begin{example}\rm\label{E-A.3} 
We set $\Gamma_{11}{}^1=0$, $\Gamma_{12}{}^1=0$, $\Gamma_{22}{}^1=0$, as in Theorem \ref{TA.1}-(1.b). 
We then have $\rho_{\nabla}=\{\Gamma_{11}{}^2\Gamma_{22}{}^2-(\Gamma_{12}{}^2)^2\}dx^1\otimes dx^1$ and $\nabla\rho_{\nabla}=0$.
Since ${{x^1}}\in E(0,\nabla)$, we follow Theorem \ref{T1.3} and consider the strongly Liouville equivalent connection $\tilde\nabla$ determined by the 1-form $\omega=-d \log x^1$.
We verify that 
$$
\rho_{\tilde\nabla}=\rho_\nabla\quad\text{and}\quad
\tilde\nabla\rho_{\tilde\nabla}=4(x^1)^{-1}\rho_{\nabla,11}dx^1\otimes dx^1\otimes dx^1
$$ 
so this is not a symmetric space if we
choose $\Gamma_{11}{}^2\Gamma_{22}{}^2-(\Gamma_{12}{}^2)^2\neq 0$. Hence $\nabla$ is not locally isomorphic to $\tilde\nabla$. 
\end{example}

\subsubsection{Higher dimensional examples of Type~$\mathcal{A}$}\label{S-B}

Let $\mathcal{M}=(\mathbb{R}^3,\nabla)$ be a Type $\mathcal{A}$ geometry, so the Christoffel symbols $\Gamma_{ij}{}^k$ are constant.
We shall only list the non-zero Christoffel symbols in what follows and omit the details of the computation. Here $\mu_3=-\frac{1}{2}$.
The following is an example where $\rho$ is non-degenerate and $\mu\ne-\frac12$.

\begin{example}\label{EB.1}\rm 
Set the non-zero Christoffel symbols $\Gamma_{12}{}^3=1$, $\Gamma_{13}{}^1=3$, $\Gamma_{23}{}^2=4$, $\Gamma_{33}{}^3=5$. Then 
$\rho_{\nabla}=5dx^1\otimes dx^2+5dx^2\otimes dx^1+10dx^3\otimes dx^3$. We have
$$
E(\mu,\nabla)=\left\{\begin{array}{ll}
\operatorname{Span}\{e^{3x^3},x^1e^{3x^3}\}&\text{ if }\mu=-\frac35\\
\operatorname{Span}\{1\}&\text{ if }\mu=0\\
\{0\}&\text{ otherwise}\end{array}\right\}\,.
$$\end{example}

The Ricci tensor in the following example is degenerate, but non-zero, and there are an infinite number of non-trivial
eigenvalues; this is a genuinely new phenomena not present for Type~$\mathcal{A}$ surface models.
 
\begin{example}\label{EB.2}\rm
Let $\mathcal{M}_{x,y,z,w}:=(\mathbb{R}^3,\nabla)$ be a $3$-dimensional Type $\mathcal{A}$ model
where the (possibly) non-zero Christoffel symbols are:
$$
\Gamma_{11}{}^1=z,\,\,\,\,
\Gamma_{12}{}^1=1,\,\,\,\,
\Gamma_{13}{}^1=x,\,\,\,\,
\Gamma_{22}{}^2=1,\,\,\,\,
\Gamma_{23}{}^1=x,\,\,\,\,
\Gamma_{33}{}^2=y,\,\,\,\,
\Gamma_{33}{}^3=w.
$$
The Ricci tensor is 
$$
\rho_{\nabla}=\left(\begin{array}{ccc}0&0&0\\0&0&x(z-1)\\0&x(z-1)&wx-x^2+2y\end{array}\right)\,.
$$ 
Depending on the values of $x$, $y$, $z$ and $w$, $\dim\{E(-\frac12,\nabla)\}$ is as follows:
\begin{enumerate}
	\item $\dim\{E(-\frac12,\nabla)\}=0$ if and only if  $x\neq 0$  and either  $z=0$ or $z\notin\{0,1\}$ and $w\neq\frac{x+2xz-xz^2}{2z}$.
	\item $\dim\{E(-\frac12,\nabla)\}=1$ if and only if $x\neq 0$, $z\notin\{0,1\}$ and $w=\frac{x+2xz-xz^2}{2z}$.
	\item $\dim\{E(-\frac12,\nabla)\}=2$ if and only if $x\ne0$, $z=1$, $w\ne x$.
	\item $\dim\{E(-\frac12,\nabla)\}=4$ if and only if $x=0$ or $w=x$ and $z=1$.
\end{enumerate}

\end{example}

\subsection{Type~$\mathcal{B}$ surface models}\label{S-C}

Let $\mathcal{M}=(\mathbb{R}^+\times\mathbb{R},\nabla)$ be a Type~$\mathcal{B}$ affine surface model; 
the Christoffel symbols are given by $\Gamma_{ij}{}^k=(x^1)^{-1}C_{ij}{}^k$ where $C_{ij}{}^k$ are constant. We assume $\rho_{\nabla}\ne0$ to ensure the
geometry is not flat. We have $\mathcal{M}$ is also Type~$\mathcal{A}$ if and only if $(C_{12}{}^1,C_{22}{}^1,C_{22}{}^2)=(0,0,0)$;
the Ricci tensor has rank $1$ in this instance (see \cite{BGG16}). We first examine the Yamabe solitons, working modulo linear equivalence:

\begin{theorem}\label{th:Yamabe-type-B}
Let $\mathcal{M}$ be a Type $\mathcal{B}$ surface. Then $E(0,\nabla)=\operatorname{Span}\{1\}$ except in the following cases where we
also require $\rho_\nabla\ne0$.
\begin{enumerate}
\item $(C_{11}{}^1,C_{12}{}^1,C_{22}{}^1)=(-1,0,0)$, and $E(0,\nabla)=\operatorname{Span}\{1,\log(x^1)\}$.
 \smallbreak\item $(C_{11}{}^1,C_{12}{}^1,C_{22}{}^1)=\kappa(-1,0,0)$, 
$E(0,\nabla)=\operatorname{Span}\{1,(x^1)^{C_{11}{}^1+1}\}$, and \newline$\kappa\neq 1$.
 \smallbreak\item $(C_{11}{}^2,C_{12}{}^2,C_{22}{}^2)=(0,0,0)$, and $E(0,\nabla)=\operatorname{Span}\{1,x^2\}$.
 \smallbreak\item $(C_{11}{}^1,C_{12}{}^1,C_{22}{}^1)=c(C_{11}{}^2,C_{12}{}^2,C_{22}{}^2)$, and $E(0,\nabla)=\operatorname{Span}\{1,x^1-cx^2\}$.
\end{enumerate}
\end{theorem}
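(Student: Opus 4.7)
With $\Gamma_{ij}{}^k = (x^1)^{-1} C_{ij}{}^k$, the equation $\mathcal{H}_\nabla f = 0$ unfolds into the three scalar PDEs
\[
x^1\,\partial_{x^i x^j}f = C_{ij}{}^1\,\partial_{x^1}f + C_{ij}{}^2\,\partial_{x^2}f, \qquad (i,j)\in\{(1,1),(1,2),(2,2)\}.
\]
Constants always satisfy these, so $1\in E(0,\nabla)$, and Theorem~\ref{T1.1}(4) bounds $\dim E(0,\nabla)\le 3$. The task is to identify exactly when a second (and possibly third) independent solution exists.

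The key structural input is that the Christoffel symbols are independent of $x^2$, so $\partial_{x^2}$ is an affine Killing vector. By Theorem~\ref{T1.1}(2) it acts as a linear endomorphism of the finite-dimensional space $E(0,\nabla)$, hence $f$ viewed as a function of $x^2$ with $x^1$ as parameter satisfies a constant-coefficient linear ODE of order $\le 3$. Consequently every solution decomposes as a sum of terms $p(x^1,x^2)\,e^{\lambda x^2}$ with $p$ polynomial in $x^2$ of bounded degree, which reduces the analysis to a short list of trial forms.

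I would then check each trial form. First, seeking $f=f(x^1)$ reduces the system to the Euler ODE $x^1 f'' = C_{11}{}^1 f'$ together with the algebraic constraints $C_{12}{}^1 f'=0$ and $C_{22}{}^1 f'=0$; non-trivial solutions yield case (1) when $C_{11}{}^1=-1$ (giving $\log x^1$) and case (2) when $C_{11}{}^1\ne-1$ (giving $(x^1)^{C_{11}{}^1+1}$). Next, seeking $f=A(x^1)+Bx^2$ with $B$ a non-zero constant, the $(1,2)$ and $(2,2)$ equations force $C_{ij}{}^1 A' + B\,C_{ij}{}^2=0$ for $(i,j)\in\{(1,2),(2,2)\}$, which separates into case (3) (all $C_{ij}{}^2=0$ and $A$ constant, yielding the generator $x^2$) and case (4) (the proportionality $(C_{11}{}^1,C_{12}{}^1,C_{22}{}^1)=c(C_{11}{}^2,C_{12}{}^2,C_{22}{}^2)$ with $A$ linear in $x^1$, yielding $x^1-cx^2$). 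Finally, exponential pieces $e^{\lambda x^2}g(x^1)$ with $\lambda\ne 0$ are incompatible with the joint $x^1$-ODE system produced by the three PDEs together, unless the Ricci tensor vanishes, which is excluded by hypothesis.

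The main obstacle is the bookkeeping: the six parameters $C_{ij}{}^k$ admit many vanishing patterns, and one must verify in each branch that the listed solutions exhaust $E(0,\nabla)$ so that $\dim=2$, and that the explicit non-vanishing conditions on $\rho_\nabla$ prevent the geometry from degenerating into a case covered by Theorem~\ref{T1.5} (which would give $\dim=3$). A secondary subtlety is confirming the four cases are mutually exclusive: the overlap between (1) and (2) is resolved by the dichotomy $C_{11}{}^1=-1$ versus $C_{11}{}^1\ne -1$, and the overlap between (2) and (4) at $c=0$ collapses to the same generator $x^1$.
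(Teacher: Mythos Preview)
The paper does not prove this theorem. In the paragraph introducing the appendix it states explicitly: ``Our account here is purely expository to illustrate some of the phenomena which occur; we shall postpone the proofs of these results for a subsequent paper \cite{BGGV17x}.'' There is therefore no proof in the present paper to compare your proposal against.

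On its own merits, your outline is a sound way to attack the problem. The key structural observation---that $\partial_{x^2}$ is an affine Killing vector field and hence acts as a linear endomorphism of the finite-dimensional space $E(0,\nabla)$---is correct and efficiently organizes the case analysis. Two remarks that would strengthen the write-up: First, you can dispense with the third possibility ($\dim=3$) up front by invoking Theorem~\ref{T1.5}(3): since $\mu=0\ne\mu_2=-1$, a three-dimensional solution space would force $\rho_\nabla=0$, which is excluded. This immediately caps $\dim E(0,\nabla)\le 2$ and removes any need to worry about size-$3$ Jordan blocks or complex eigenvalue pairs. Second, your exclusion of exponential solutions $g(x^1)e^{\lambda x^2}$ with $\lambda\ne0$ actually goes through without appealing to the Ricci tensor at all: the $(2,2)$-equation alone (if $C_{22}{}^1=0$) or the $(1,2)$ and $(2,2)$ equations together (if $C_{22}{}^1\ne0$) force a polynomial identity in $x^1$ whose leading coefficient is $\lambda^2$, a contradiction. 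The remaining bookkeeping---checking that the overlaps among cases (1)--(4) collapse as you describe, and that the residual degenerate subcase $C_{12}{}^1=C_{12}{}^2=C_{22}{}^1=C_{22}{}^2=0$ has $\rho_\nabla=0$---is routine.
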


Any Type~$\mathcal{B}$ surface which is also Type~$\mathcal{A}$ is strongly projectively flat.
There are, however, strongly projectively flat surfaces of Type~$\mathcal{B}$ which are not of Type~$\mathcal{A}$.
Moreover, there exist Type $\mathcal{B}$ surfaces where $\dim\{E(-1,\nabla)\}=1$.

\begin{theorem}\label{TC.2}
Let $\mathcal{M}$ be a Type~$\mathcal{B}$ surface. Let $\mu=-1$. Then one of the following holds
\begin{enumerate}
\item $\dim\{E(-1,\nabla)\}=1$ if and only if $\mathcal{M}$ is
linearly equivalent to:
\begin{enumerate}
\item $C_{22}{}^1=0$, $C_{22}{}^2=C_{12}{}^1\neq 0$, or
\item $C_{22}{}^1=\pm 1$, $C_{12}{}^1=0$, $C_{22}{}^2=\pm 2C_{11}{}^2\neq 0$,
$C_{11}{}^1=1+2C_{12}{}^2\pm(C_{11}{}^2)^2$.
\end{enumerate}
\item $\dim\{E(-1,\nabla)\}=3$ if and only if $\mathcal{M}$ is strongly projectively flat. In this case $\mathcal{M}$ is
linearly equivalent to one of the surfaces:
\begin{enumerate}
\item $C_{12}{}^1=C_{22}{}^1=C_{22}{}^2=0$ (i.e. $\mathcal{M}$ is also of Type~$\mathcal{A}$).
\item $C_{11}{}^1=1+2C_{12}{}^2$, $C_{11}{}^2=0$, $C_{12}{}^1=0$, $C_{12}{}^2\neq 0$, $C_{22}{}^1=\pm1$, $C_{22}{}^2=0$. 
\end{enumerate}
\end{enumerate} 
\end{theorem}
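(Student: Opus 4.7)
The plan is to combine the general structural results from Sections~\ref{S2}--\ref{S3} with an explicit reduction of the affine quasi-Einstein equation on Type~$\mathcal{B}$ surfaces. Since $m=2$ and $\mu_2=-1$, Theorem~\ref{T1.1}(4) forces $\dim\{E(-1,\nabla)\}\in\{0,1,2,3\}$, and Theorem~\ref{T1.8} eliminates the value $2$. The theorem to be proved thus amounts to deciding, in terms of the constants $C_{ij}{}^k$ parametrizing a Type~$\mathcal{B}$ surface, exactly when the non-trivial values $\dim=3$ and $\dim=1$ are attained, and classifying the corresponding connections up to the linear equivalence group that preserves the Type~$\mathcal{B}$ form.

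First I would write the equation explicitly. For a Type~$\mathcal{B}$ connection the Ricci tensor equals $(x^1)^{-2}$ times an algebraic expression in the $C_{ij}{}^k$, so after clearing denominators the equation $\mathcal{H}_\nabla f+f\rho_{s,\nabla}=0$ becomes a system of three second-order linear PDEs with coefficients polynomial in $x^1$ alone. The next key observation is a symmetry: the Euler field $X=x^1\partial_{x^1}+x^2\partial_{x^2}$ is affine Killing for every Type~$\mathcal{B}$ connection, as one checks directly from $\Gamma_{ij}{}^k=(x^1)^{-1}C_{ij}{}^k$. By Theorem~\ref{T1.1}(2), the solution space $E(-1,\nabla)$ is invariant under $X$, and finite-dimensionality lets us decompose it into generalized eigenspaces. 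An eigenvector of weight $\alpha$ has the shape $(x^1)^\alpha h(x^2/x^1)$ (possibly with polynomial-in-$\log(x^1)$ factors in the non-semisimple case), so substitution reduces the three PDE components to ODEs in the single variable $t=x^2/x^1$.

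To characterize $\dim=3$, apply Theorem~\ref{T1.5}(1): this occurs precisely when $\mathcal{M}$ is strongly projectively flat. Strong projective flatness of surfaces is equivalent, by combining parts (1b) and (2) of Lemma~\ref{L4.1}, to the total symmetry of $\rho_\nabla$ and $\nabla\rho_\nabla$. In Type~$\mathcal{B}$ geometry those symmetries translate into polynomial constraints on the $C_{ij}{}^k$; solving them and then bringing the result to normal form under the linear equivalence group that preserves the Type~$\mathcal{B}$ form (transformations of the shape $(x^1,x^2)\mapsto(x^1, ax^1+bx^2)$ together with reflections) should yield precisely the two families (2a) and (2b).

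For $\dim=1$ the remaining non-flat cases satisfy $\dim\in\{0,1\}$, so it suffices to detect when a single non-trivial solution exists. Here the ODE reduction of the second paragraph gives a finite set of candidate weights $\alpha$; for each one solves the reduced ODE and then imposes the remaining components of the Hessian equation as algebraic compatibility conditions on the $C_{ij}{}^k$. After reducing modulo linear equivalence, these conditions should collapse to the normal forms (1a) and (1b). The main obstacle is the case analysis itself: the structure of the reduced ODE depends sensitively on which of the $C_{ij}{}^k$ vanish (for instance, $C_{22}{}^1=0$ yields $\partial_{x^2}$ as an additional affine Killing field, which threatens to enlarge the solution space), and one must repeatedly invoke Theorem~\ref{T1.8} to rule out the spurious value $\dim=2$ and thereby force the algebraic compatibility conditions to collapse to the listed normal forms together with their $\pm$ sign choices.
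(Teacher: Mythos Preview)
The paper does not prove this theorem. In the paragraph introducing the appendix the authors state explicitly that the account of Sections~\ref{S-A} and~\ref{S-C} is ``purely expository'' and that they ``postpone the proofs of these results for a subsequent paper'' (reference~\cite{BGGV17x}). So there is no argument in the paper against which to compare your proposal.

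That said, your outline is a reasonable strategy, and the reduction of Assertion~(2) to the total symmetry of $\rho_\nabla$ and $\nabla\rho_\nabla$ via Theorem~\ref{T1.5}(1) and Lemma~\ref{L4.1} is exactly right. One correction is worth making for Assertion~(1): the vector field $\partial_{x^2}$ is \emph{always} an affine Killing field for a Type~$\mathcal{B}$ connection, since $\Gamma_{ij}{}^k=(x^1)^{-1}C_{ij}{}^k$ is independent of $x^2$; this has nothing to do with $C_{22}{}^1$. Together with the Euler field $X=x^1\partial_{x^1}+x^2\partial_{x^2}$ you already identified, one has $[X,\partial_{x^2}]=-\partial_{x^2}$, so the two span the non-abelian two-dimensional Lie algebra acting on $E(-1,\nabla)$. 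When $\dim\{E(-1,\nabla)\}=1$ both fields act by scalars, and the bracket relation forces $\partial_{x^2}$ to act by zero; hence any solution is a function of $x^1$ alone. This collapses the ODE reduction you describe to a single-variable problem from the outset and substantially tames the case analysis you were anticipating.
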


Let $\mu\ne0$ and $\mu\ne-1$. In the Type~$\mathcal{A}$ setting, Theorem \ref{TA.1} shows that $\dim\{E(\mu,\nabla)\}=0$ or $\dim\{E(\mu,\nabla)\}=2$.
The situation is quite different in the Type~$\mathcal{B}$ setting as there are examples where $\dim\{E(\mu,\nabla)\}=1$.

\begin{theorem}\label{TC.3} 
Let $\mathcal{M}$ be a Type $\mathcal{B}$ model which is not of Type $\mathcal{A}$ with $\rho_{s,\nabla}\ne 0$ and let $\mu\ne 0,-1$.
\begin{enumerate}
\item $\dim\{E(\mu,\nabla)\}\geq 1$ if and only if $\mathcal{M}$ is linearly equivalent to a surface given by
$C_{22}{}^1=\pm 1$\,, $C_{12}{}^1=0$\,, $C_{22}{}^2=\pm 2C_{11}{}^2$\,,
where $\mu$ is determined by
$\mu=\Delta^{-2}\{1+2C_{12}{}^2\pm 2(C_{11}{}^2)^2 -(C_{11}{}^1-C_{12}{}^2)^2\}$, for $\Delta:=1-C_{11}{}^1+C_{12}{}^2\ne 0$.

\item $\dim\{E(\mu,\nabla)\}=2$ if and only if $\mathcal{M}$ is
linearly equivalent to one of the following two surfaces: 
\goodbreak\begin{enumerate}
\item $
C_{11}{}^1=-1+C_{12}{}^2$\,,
$C_{11}{}^2=0$\,,
$C_{12}{}^1=0$\,,
$C_{22}{}^1=\pm 1$\,,
$C_{22}{}^2=0$\,,
where $\mu=\frac{1}{2}C_{12}{}^2\neq 0$.
\item $
C_{11}{}^1=-\frac{1}{2}(5\pm 16 (C_{11}{}^2)^2)$\,,
$C_{12}{}^1=0$\,,
$C_{12}{}^2=-\frac{1}{2}(3\pm 8(C_{11}{}^2)^2)$\,,
$C_{22}{}^1=\pm 1$\,,
$C_{22}{}^2=\pm 2 C_{11}{}^2$\,,
where $\mu=-\frac{3\pm 8(C_{11}{}^2)^2}{4\pm 8(C_{11}{}^2)^2}$.
and where $C_{11}{}^2\neq 0,\pm\frac{1}{\sqrt{2}}$\,.
\end{enumerate}
\end{enumerate}
\end{theorem}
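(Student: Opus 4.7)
The plan is to exploit the two affine Killing vector fields intrinsic to every Type~$\mathcal{B}$ model in order to collapse the PDE system into a finite algebraic problem. Let $K_1:=\partial_{x^2}$ and $K_2:=x^1\partial_{x^1}+x^2\partial_{x^2}$; a direct check shows both preserve the pattern $\Gamma_{ij}{}^k=(x^1)^{-1}C_{ij}{}^k$, they span a solvable $2$-dimensional Lie subalgebra with $[K_2,K_1]=-K_1$, and by Theorem~\ref{T1.1}(2) each $K_i$ acts on the finite-dimensional space $E(\mu,\nabla)$. By Lie's theorem, after complexification, $K_1$ and $K_2$ share a common eigenvector, which will furnish the basic ansatz.

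I would next observe that a common eigenvector $f$ with $K_1 f=\lambda f$ and $K_2 f=\sigma f$ must be of the form $f=F(x^1)e^{\lambda x^2}$, and the second equation then separates variables to force $\lambda=0$ and $F(x^1)=c\,(x^1)^\sigma$. Thus every nontrivial $E(\mu,\nabla)$ contains (up to a reality issue I would address by extracting real and imaginary parts) a power-law solution $f=(x^1)^\sigma$, possibly complemented by a logarithmic companion $(x^1)^\sigma\log x^1$ at a repeated root. Substituting $f=(x^1)^\sigma$ into $\mathcal{H}_\nabla f=\mu f\rho_{s,\nabla}$ and cancelling the common factor $(x^1)^{\sigma-2}$ collapses the PDE into three algebraic conditions, namely $\sigma(\sigma-1-C_{11}{}^1)=\mu\,r_{11}$, $\sigma C_{12}{}^1=-\mu\,r_{12}$ and $\sigma C_{22}{}^1=-\mu\,r_{22}$, where the constants $r_{ij}$ are the explicit polynomials in the $C_{ij}{}^k$ coming from $\rho_{s,\nabla}=r_{ij}(x^1)^{-2}\,dx^i\otimes dx^j$.

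For the ``only if'' direction of Assertion~(1) the next step would be to eliminate $\sigma$ from these three relations and normalize using the linear group preserving the Type~$\mathcal{B}$ form, namely transformations $(x^1,x^2)\mapsto(x^1, ax^2+bx^1)$ combined with the scaling $(x^1,x^2)\mapsto(cx^1,cx^2)$; these act affinely on the $C_{ij}{}^k$ and can be used to bring the constants into the canonical form $C_{22}{}^1=\pm 1$, $C_{12}{}^1=0$, $C_{22}{}^2=\pm 2C_{11}{}^2$, with the stated closed-form value of $\mu$ then dropping out of the remaining $(1,1)$-quadratic. The converse is a direct substitution verification. For Assertion~(2) the second independent solution arises either from a second common eigenvector with a distinct exponent $\sigma'$ (giving case (2a)) or as the logarithmic companion at a repeated $\sigma$ (giving case (2b)), and the discriminant of the $(1,1)$-quadratic distinguishes these subcases.

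The main obstacle will be the classification bookkeeping in the normalization step: one must carefully track the orbits of the linear equivalence group on the admissible $C_{ij}{}^k$, use the hypothesis $\rho_{s,\nabla}\ne 0$ to discard families that would collapse to Type~$\mathcal{A}$ or to Ricci-flat degeneracies, and handle the two sign choices in the canonical form consistently. A further subtlety is passing from a complex common eigenvector produced by Lie's theorem to a genuinely real solution, which requires checking that the algebraic conditions force $\sigma$ to be real in the cases that actually contribute.
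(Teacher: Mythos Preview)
The paper does not prove this theorem: the appendix states explicitly that the results there are purely expository and that the proofs ``are postponed for a subsequent paper \cite{BGGV17x}''. So there is no in-paper argument to compare your proposal against. That said, your overall strategy---using the two affine Killing fields $K_1=\partial_{x^2}$ and $K_2=x^1\partial_{x^1}+x^2\partial_{x^2}$ to force a power-law ansatz $(x^1)^\sigma$, then reducing $\mathfrak{Q}_{\mu,\nabla}f=0$ to the three scalar relations you wrote---is correct and is the natural approach for Type~$\mathcal{B}$ models.

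There is one genuine gap in your outline for Assertion~(2). You assert that a second independent solution must be either $(x^1)^{\sigma'}$ for a distinct exponent or the logarithmic companion $(x^1)^\sigma\log x^1$, and you match these to cases~(2a) and~(2b). This presumes that $K_1=\partial_{x^2}$ annihilates all of $E(\mu,\nabla)$. Lie's theorem only guarantees one common eigenvector; on a two-dimensional representation of your solvable algebra $K_1$ is nilpotent but need not vanish, and the alternative is a basis $\{f_1,f_2\}$ with $K_1f_1=0$, $K_1f_2=f_1$, which yields a solution of the shape $f_2=x^2(x^1)^\sigma+h(x^1)$. Substituting this into the $(1,2)$ and $(2,2)$ components of $\mathcal{H}_\nabla f_2=\mu f_2\rho_{s,\nabla}$ produces inhomogeneous terms proportional to $(\sigma-C_{12}{}^2)$ and $C_{22}{}^2$ coming from $\partial_{x^2}f_2=(x^1)^\sigma$; you must show these force a contradiction under the standing hypotheses (not Type~$\mathcal{A}$, $\rho_{s,\nabla}\ne0$, $\mu\ne0,-1$) or else carry this extra branch through the linear-equivalence normalization. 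Without that step, your dichotomy between~(2a) and~(2b) is unjustified.
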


\begin{remark}\rm 
The existence of examples where
$\dim\{E(0,\nabla)\}$ is either $0$, $1$ or $2$ was shown in Theorem \ref{th:Yamabe-type-B}.
The surfaces of Theorem~\ref{TC.2} provide examples where one has $\dim\{E(-1,\nabla)\}=1$
and $\dim\{E(-1,\nabla)\}=3$. 
In Theorem~\ref{TC.3}, we gave examples of homogeneous affine surfaces where $\dim\{E(\mu,\nabla)\}=1$ and $\dim\{E(\mu,\nabla)\}=2$, for arbitrary $\mu\neq 0,-1$.
Thus all values for $\dim\{E(\mu,\nabla)\}$ are permissible.
\end{remark}

\end{document}